\DeclareMathOperator{\col}{col}
\DeclareMathOperator{\diag}{diag}
\newtheorem{thm}{Theorem}[section]
\newtheorem{corollary}{Corollary}[section]
\newtheorem{lem}{Lemma}[section]
\newtheorem{definition}{Definition}[section]
\newtheorem{assum}{Assumption}[section]
\newtheorem{rem}{Remark}[section]
\def\BibTeX{{\rm B\kern-.05em{\sc i\kern-.025em b}\kern-.08em
    T\kern-.1667em\lower.7ex\hbox{E}\kern-.125emX}}
\begin{document}
\title{Continuous-Time Distributed Seeking for Variational Generalized Nash Equilibrium of Online Game}
\author{Jianing Chen, Sichen Qian, Chuangyin Dang, \IEEEmembership{Senior Member, IEEE} and Sitian Qin$^{*}$
\thanks{© 202X IEEE. Personal use of this material is permitted. Permission from IEEE must be obtained for all other uses, in any current or future media, including reprinting/republishing this material for advertising or promotional purposes, creating new collective works, for resale or redistribution to servers or lists, or reuse of any copyrighted component of this work in other works.}
\thanks{ Jianing Chen is with the Department of Mathematics, Harbin Institute of Technology, Weihai, 264209, China, and also with the Department of Systems Engineering, City University of Hong Kong, Hong Kong (e-mail: jchen2265-c@my.cityu.edu.hk).}
\thanks{Sichen Qian is with the School of Mathematics, Southeast University, Nanjing, 210096, China (e-mail:qiansichen@foxmail.com).}
\thanks{Chuangyin Dang is with the Department of Systems Engineering, City University of Hong Kong, Hong Kong (e-mail: mecdang@cityu.edu.hk).}
\thanks{Sitian Qin is with the Department of Mathematics, Harbin Institute of Technology, Weihai, 264209, China (e-mail: qinsitian@163.com).}
}
\maketitle

\begin{abstract}
This paper mainly investigates a class of distributed Variational Generalized Nash Equilibrium (VGNE) seeking problems for both online noncooperative games and online aggregative games with time-varying coupling inequality constraints. Two novel continuous-time distributed VGNE seeking algorithms are proposed, which realize the constant regret bound and sublinear fit bound, superior to those of the criteria for online optimization problems and online games. Furthermore, to reduce unnecessary communication among players, a dynamic event-triggered mechanism involving internal variables is introduced into the distributed VGNE seeking algorithm, while the constant regret bound and sublinear fit bound are still maintained. Also, the Zeno behavior is strictly prohibited. Moreover, we further investigate the impact of communication noise on the player's measurement of its neighbors' relative states. It is demonstrated that both the regret and fit bounds remain valid as long as the noise level is not excessively large. This result reveals, to some extent, the proposed algorithm's noise-resilient capability. Finally, an online Uncrewed Aerial Vehicle (UAV) swarm game and an online Nash-Cournot game are given to demonstrate the validity of the theoretical results.
\end{abstract}

\begin{IEEEkeywords}
Online games; Time-varying constraints; Dynamic event-triggered mechanism; Continuous-time VGNE seeking; Measurement noise
\end{IEEEkeywords}

\section{Introduction}
\IEEEPARstart{N}{oncooperative} games, where self-interested intelligent players aim to optimize their individual cost functions, arise in various engineering applications, such as congestion control \cite{yin2011nash}, plug-in electric vehicles \cite{shi2023distributed}, and smart grids \cite{ma2022fully}. A distinguishing characteristic of noncooperative games, as opposed to standard optimization problems, lies in the coupling of players' cost functions, where a single point minimizing all players' costs simultaneously may not exist. Building on the pioneering work of John Nash \cite{nash1950bargaining}, a fundamental equilibrium concept, namely the Nash Equilibrium (NE), was proposed. At an NE, no player can decrease its own cost by unilaterally changing its strategy, given the strategies of all other players. Furthermore, when the players' actions are coupled through coupling constraints, a more general concept, referred to as the Generalized Nash Equilibrium (GNE), was introduced to characterize the equilibrium behavior under such coupling \cite{facchinei2010generalized}. However, a general GNE is highly complex to search for due to the interdependence of players' actions. Thus, there is a growing interest in identifying a subset of GNEs that are more tractable. In this regard, a specific class of GNEs, known as VGNEs, have been introduced and extensively studied, owing to their equivalence to variational inequality problems and their improved economic interpretation that enhances fairness \cite{facchinei2007generalized}. To compute a VGNE algorithmically, information on other players must be communicated, observed, or measured. However, owing to the competitive nature of noncooperative games, full knowledge of all players' actions may be impractical. Consequently, estimates based on distributed consensus among players have garnered significant interest, as they eliminate the need for a dedicated inter-player communication infrastructure \cite{ye2017distributed,gadjov2019distributed}. 

Due to the inherent uncertainty and complexity of the environment, cost functions and constraints in noncooperative games are typically time-varying. This variability introduces additional challenges in developing accurate theoretical models and applying classical algorithms, such as those in \cite{Wang2020,Li2024,Yang2024,Zeng2019,Ding2024}, for VGNE computation. Typically, in these dynamic settings, online distributed algorithms for online games have emerged as a prominent approach for sequential decision-making over the past two decades \cite{Xu2022,Meng2023,Meng2021,Deng2023,Lu2020,Belgioioso2024,Lu2024}. For example, in \cite{Xu2022}, an online distributed algorithm based on average consensus technique was designed for $N$-cluster games, achieving a sublinear regret bound. However, it is noteworthy that most existing research on VGNE seeking in online games is limited to discrete-time settings.\begin{table*}[htbp]
	\centering
	\caption{Comparison of the proposed algorithm herein with related works measured by regret and fit.}
	\begin{tabular}{p{0.15\textwidth} p{0.1\textwidth} p{0.2\textwidth} p{0.45\textwidth}}
		\hline
		\toprule
		Reference & Algorithm Type & Game Type & Regret and Fit Bounds\\
		\midrule
		Xu et al. \cite{Xu2022}& Discrete-time & $N$-cluster game &$\mathcal{R}^{\top}=\mathcal{O}\left(1+\sqrt{T}\right)$ \\
		\midrule
Meng et al.\cite{Meng2023}&Discrete-time&Noncooperative game with coupling inequality constraints
		&$\mathcal{R}^{\top}=\mathcal{O}\left(T^{\frac{13}{14}}\right)$, $\mathcal{F}^{\top}=\mathcal{O}\left(T^{\frac{13}{14}}\right)$\\
		\midrule
		Meng et al. \cite{Meng2021} & Discrete-time &Noncooperative game with coupling inequality constraints and private set constraints&$\mathcal{R}^{\top}=\mathcal{O}\left(T^{\max\left\{\frac{1}{2}+a_{1},\frac{1}{2}+\frac{a_{2}}{2},1-\frac{a_{1}}{2}\right\}}\right)$,

 $\mathcal{F}^{\top}=\mathcal{O}\left(T^{\max\left\{1-\frac{a_{1}}{2},2-\frac{3a_{2}}{2},\frac{1}{2}+\frac{a_{1}}{2},\frac{3}{2}+a_{1}-\frac{3a_{2}}{2}\right\}}\right)$\\
		\midrule
		Deng and Zuo \cite{Deng2023}&Discrete-time&Noncooperative game with coupling inequality constraints and private set constraints &$\mathcal{R}^{\top}=\mathcal{O}\left(\sqrt{T\left(\frac{v_{T}+1}{\alpha_{T}^{2}}+\sum_{t=1}^{T}\alpha_{t}\right)}\right)$,

$\mathcal{F}^{\top}=\mathcal{O}\left(\frac{1}{\alpha_{T}}\sqrt{\frac{v_{T}+1}{\alpha_{T}^{2}+\sum_{t=1}^{T}\alpha_{t}}}\right)$\\
		\midrule
		Lu et al.\cite{Lu2020} & Discrete-time &Noncooperative game with coupling inequality constraints and private set constraints&$\mathcal{R}^{\top} =\mathcal{O}\left(\sqrt{T\left(\frac{\Theta_{T}+1}{\gamma^{2}(T)}+\sum_{t=1}^{T}\gamma(t)\right)}\right)$,

$\mathcal{F}^{\top}=\mathcal{O}\Big(\frac{\sqrt{\left(\frac{\Theta_{T}+1}{\gamma^{2}(T)}+\sum_{t=1}^{T}\gamma(t)\right)\left(1+\sum_{t=1}^{T}\gamma^{2}(t)\right)}}{\gamma(T)}\Big)$\\
		\midrule
		Herein &Continuous-time&Noncooperative game with coupling inequality constraints and private set constraints& Continuous-time communication:
					$\mathcal{R}^{\top}=\mathcal{O}\left(1\right)$,~$\mathcal{F}^{\top}=\mathcal{O}\left(\sqrt{T}\right)$

		Event-triggered communication:
				$\mathcal{R}^{\top}=\mathcal{O}\left(1\right)$,~$\mathcal{F}^{\top}=\mathcal{O}\left(\sqrt{T}\right)$\\
		\bottomrule
		\hline
	\end{tabular}
	\label{tab:example}
\end{table*}The performance of these discrete-time algorithms often hinges on the careful and sometimes unpredictable design of step sizes (learning rates). In contrast, continuous-time algorithms, which avoid the requirement for such precise step size design, can offer a powerful alternative for VGNE seeking. Moreover, many real-world systems operate in continuous time, such as the continuous dynamics of electric current flow, making continuous-time algorithms particularly useful for modeling such systems. Whether an effective continuous-time distributed algorithm exists for online games remains an open research question.

While the use of communication networks can be advantageous for the design of distributed algorithms, practical limitations on communication resources often arise. To significantly reduce communication costs, one effective strategy is to employ an event-triggered mechanism, wherein players exchange information with their neighbors only when certain predefined triggering conditions are activated \cite{cai2023distributed}. Distributed event-triggered NE seeking algorithms have been developed for various scenarios, including aggregative games \cite{Shi2019} and general noncooperative games \cite{zhang2021nash,xu2022hybrid}. For instance, an adaptive event-triggered mechanism was developed for distributed NE seeking in noncooperative games \cite{xu2022hybrid}. However, all these studies primarily focused on static noncooperative games. To the best of our knowledge, there is limited research addressing online games within an event-triggered framework. Besides, most existing work on event-triggered mechanisms is applicable only to NE seeking. In scenarios involving coupling constraints, VGNE seeking algorithms require the transmission of information related to Lagrangian multipliers. In the context of event-triggered communication for online games with time-varying coupling constraints, players often need to share their private information, including actions, estimations and Lagrangian multipliers, with their neighbors. This requirement could potentially increase the frequency of information exchanges among players. Therefore, it is still an important challenging issue to design an efficient event-triggered mechanism for distributed VGNE seeking in online games with time-varying coupling constraints.

The main contributions of this paper can be highlighted as follows (see TABLE \ref{tab:example} for the comparison with existing relevant works).
\begin{enumerate}
	\item As far as we know, this paper is the first to investigate continuous-time distributed VGNE seeking for both online noncooperative games and online aggregative games with time-varying coupling constraints. Unlike the discrete-time VGNE seeking algorithms discussed in \cite{Xu2022,Meng2023,Meng2021,Deng2023,Lu2020,Belgioioso2024,Lu2024}, the continuous-time algorithms eliminate the need for precise step size tuning and provide a more natural framework for modeling real-world systems. It is demonstrated that the proposed algorithms achieve a constant regret bound and an $\mathcal{O}\left(\sqrt{T}\right)$ fit bound. In contrast, most existing distributed VGNE seeking algorithms \cite{Xu2022,Meng2023,Meng2021,Deng2023,Lu2020,Belgioioso2024,Lu2024} typically attain only sublinear regret bounds. Moreover, by employing the average tracking technique, the parameter conditions required for the online aggregative game are further relaxed.
	\item To reduce the communication cost among players, this paper further introduces a dynamic event-triggered mechanism into the continuous-time online VGNE seeking algorithm,
which is proven to still guarantee the constant regret bound and $\mathcal{O}\left(\sqrt{T}\right)$ fit bound in the case of aperiodic discrete communication. Besides the established regret and fit bounds here, compared with the static event-triggered mechanism in \cite{Shi2019,yuan2017event}, the triggering signal of the dynamic event-triggered mechanism can be dynamically adjusted as required.
\item Furthermore, to the best of our knowledge, the paper is the first to study the influence of measurement noise on the VGNE seeking process for online games, whereas existing studies \cite{cao2021decentralized,yu2024distributed} investigate this aspect only in online optimization problems with uncoupled cost functions. It is further shown that, provided the noise level remains within a moderate range, the proposed algorithm still guarantees a constant regret bound and a sublinear fit bound. It demonstrates the algorithm's inherent resilience to measurement noise.
\end{enumerate}
\par
The rest of this paper is organized as follows. Table \ref{biaono} summarizes the notations used in this paper. Section \ref{PF} presents a brief introduction to graph and operator theories and formulates the online noncooperative game and the online aggregative game with two performance metrics. In Section~\ref{MR}, two continuous-time distributed VGNE seeking algorithms are proposed for the online noncooperative game and the online aggregative game, respectively. In addition, one event-triggered and one noise-resilient VGNE seeking algorithm are developed for the online noncooperative game, together with the corresponding theoretical analyses. Section \ref{NE} displays an online UAV swarm game and an online Nash-Cournot game to testify the efficiency of the theoretical results. Conclusion for the whole paper is arranged in Section \ref{C}.
\par
\begin{table}[htbp] 
	\centering 
	\caption{List of notations.}\label{biaono} 
	\begin{tabularx}{\columnwidth}{lX}
		\toprule
		\textbf{Notations} & \textbf{Description}  \\
		\midrule
	$\mathbb{R},~\mathbb{R}^p\text{ and }\mathbb{R}^{p\times q}$ & Sets of real numbers, $p$-dimensional vectors and $p\times q$ matrices, respectively  \\ 
$\mathbb{R}_{\geqslant0}^{q}$&Set of nonnegative $p$-dimensional vectors\\
$(\cdot)^{\top},~\Vert\cdot\Vert$ and $\Vert\cdot\Vert_{1}$ & The transpose, the Euclidean norm and the $1$-norm, respectively\\
	$\diag\{\xi_{1},\xi_{2},\cdots,\xi_{N}\}$&A diagonal matrix whose principal diagonal elements are $\xi_{i}$ for $i=1,2,\cdots,N$\\
	$\col(x_1,x_2,\cdots,x_N)$& $[x_1^{\top},x_2^{\top},\cdots,x_N^{\top}]^{\top}\in\mathbb{R}^p$ with $x_i\in\mathbb{R}^{p_i}$ and $p=\sum_{i=1}^Np_i$\\
	$\mathbf{0}_{p\times q},~ \mathbf{1}_{p\times q}$&$p\times q$ matrices whose elements are all $0$ and $1$, respectively\\
	$\mathbf{I}_{p}$&A $p\times p$ identity matrix\\
	$\left[x\right]_{+}$& $\max\{\mathbf{0},x\}$\\
	$\otimes,~\langle \cdot,\cdot\rangle$ and $\times$& The Kronecker product, the inner product and the Cartesian product, respectively\\
	$\partial_{x} f(t,x)$&The partial derivative of function $f(t,x): \mathbb{R}\times\mathbb{R}^p\rightarrow\mathbb{R}$ on $x$\\
	$\lambda_{\min}(\mathcal{L}),~\lambda_{2}(\mathcal{L})$&The smallest and the second smallest eigenvalues of matrix $\mathcal{L}$, respectively\\
	$\operatorname{sign}\left(x_i\right)$&A signum function, where $\operatorname{sign}\left(x_i\right)=-1$ if $x_i<0 ; \operatorname{sign}\left(x_i\right)=1$ if $x_i>0 ; \operatorname{sign}\left(x_i\right)=0$ otherwise\\
	$\operatorname{sgn}(x)$&$\left(\operatorname{sign}\left(x_1\right),  \cdots, \operatorname{sign}\left(x_p\right)\right)^\top$, a component-wise signum function of a vector $x \in \mathbb{R}^p$\\
	$\vert \mathcal{S}\vert$&The number of elements in set $\mathcal{S}$\\
	$\text{cl}(\cdot)$& The closure of a set\\
	$f=\mathcal{O}(g)$&For functions $f:\mathbb{R}^{p}\rightarrow\mathbb{R}$ and $g:\mathbb{R}^{p}\rightarrow\mathbb{R}$, there exists a constant $C>0$ such that $\vert f(x)\vert\leqslant C\vert g(x)\vert$ for all $x\in\mathbb{R}^{p}$\\
$\mathcal{G}$& $\mathcal{(V,E)}$, an undirected graph with a node set $\mathcal{V=}$\{1,~2,~$\cdots$,~$\mathit{N}$\} and an edge set $\mathcal{E}$\\
$\mathcal{N}_{i}$ & $\{\mathit{j:(j,i)}\in\mathcal{E}\}$, the neighbors of node $\mathit{i}$\\
$\mathcal{A}$& $\{a_{ij}\}\in\mathbb{R}^{N\times N}$, the adjacency matrix, where $a_{ii}=0$, $a_{ij}>0$ if $j\in\mathcal{N}_i$ and else, $a_{ij}=0$\\
$d_i$ & $\sum^N_{j=1}a_{ij}$, the degree of node $i$\\
$\mathcal{L}$ & $\mathcal{D}-\mathcal{A}$, the Laplacian matrix with $\mathcal{D}=\diag\{d_1,d_2,\cdots,d_N\}$\\
$P_S$& The projection on a closed convex set $S$\\
$\mathbf{T}_S$&The tangent cone of a closed convex set $S$\\
$\Pi_S\left[x,v\right]$& $P_{\mathbf{T}_S(x)}(v)=\lim _{\delta \rightarrow 0^{+}} \frac{P_S(x+\delta v)-x}{\delta}$, the projection on the tangent cone of $S$ at $x$\\
$K_f,~K_g$& The upper bound of $\Vert J_{i}(t,x_{i},x_{-i})\Vert$ and $\Vert g_{i}(t,x_{i})\Vert$, respectively\\
$\Gamma$ & $\left\{\mathcal{V},J_{i},\Omega_{i}\cap U(t)\right\}$, a constrained online noncooperative game with cost function $J_i$, set constraint $\Omega_i$ and coupling constraint $U(t)$\\
$\Upsilon^i_j$& $i$th player's estimation on $j$th player's action\\ 
$F$ & $\text{col}(\partial_{x_1} J_{1}(t,x_{1},x_{-1}),\partial_{x_2} J_{2}(t,x_{2},x_{-2})\cdots$ $,\partial_{x_N} J_{N}(t,x_{N},x_{-N}))$, the pseudo-gradient\\
$\mathbf{F}$ &$\text{col}(\partial_{x_{1}}J_{1}(t,x_{1},\Upsilon^{1}_{-1}),\partial_{x_{2}}J_{2}(t,x_{2},\Upsilon^{2}_{-2}),$ $\cdots,\partial_{x_{N}}J_{N}(t,x_{N},\Upsilon^{N}_{-N}))$, the extended pseudo-gradient\\
$f_{i}(t,x_{i},\sigma(x))$&The $i$th player's cost function for an aggregative game with $\sigma(x)=\frac{1}{N}\sum_{i=1}^{N}\vartheta_{i}(x_{i})$ constituted by linear mappings $\vartheta_{i}(x_{i}):\mathbb{R}^{p_i}\rightarrow\mathbb{R}^n$\\
$\mathcal{R}^{\top}$& $\int_{0}^{T}\sum_i(J_{i}(t,x_{i}(t),x_{-i}^{*})-J_{i}(t,x_{i}^{*},x_{-i}^{*}))\text{d}t$, the static regret\\
$\mathcal{F}^{\top}$&$\|[\int_{0}^{T}\sum_{i}g_{i}(t,x_{i})\text{d}t]_{+}\|$, the fit\\
$k,~K_\mu,~\omega$& Positive constants\\
$R_{i},~S_i$&Selection matrices, see Page 5 for detailed definitions\\
$\Phi_{i}(t,x_{i},\phi_{i})$&$(\partial_{x_i}f_i(t,\cdot,\sigma)+\frac{1}{N}\partial_\sigma f_i^\top(t,x_i,\cdot)\partial\vartheta_i)|_{\sigma=\phi_i}$\\
$x_i,~\mu_i,~\psi_i,~\phi_i$& Player $i$'s action, primal-dual variable, auxiliary variable, estimation variable for $\sigma$, respectively\\
$m,~m_0,~l,~l_0$& Strongly monotone constant for $F$, minimal strongly convex constant for $J_i$, Lipschitz constant for $F$ and Lipschitz constant for $\mathbf{F}$\\
		\bottomrule
	\end{tabularx}
\end{table}
\section{Preliminaries and Problem Formulation}\label{PF}
\subsection{Graph Theory}
An undirected graph $\mathcal{G:=}\mathcal{(V,E)}$ is made up of a node set $\mathcal{V:=}$\{1,~2,~$\cdots$,~$\mathit{N}$\} and an edge set $\mathcal{E}$. $\mathcal{N}_{i}:=\{\mathit{j:(j,i)}\in$ $\mathcal{E}$\} stands for the neighbors of node $\mathit{i}$, where ($\mathit{j,i})\in\mathcal{E}$ means that node $\mathit{i}$ can communicate with node $\mathit{j}$. A path is a sequence of isolated vertices such that any pair of vertices appearing consecutively is an edge of graph $\mathcal{G}$. A graph $\mathcal{G}$ is said to be connected if there is a path between any two nodes. $\mathcal{A}=\{a_{ij}\}\in\mathbb{R}^{N\times N}$ is the adjacency matrix, where $a_{ii}=0$, $a_{ij}>0$ if $j\in\mathcal{N}_i$ and else, $a_{ij}=0$. Denote $d_i=\sum^N_{j=1}a_{ij}$ as the degree of node $i$. The Laplacian matrix of $\mathcal{G}$ is denoted by $\mathcal{L}=\mathcal{D}-\mathcal{A}$, where $\mathcal{D}=\diag\{d_1,d_2,\cdots,d_N\}$.
\begin{lem}\cite{guo2020predefined}\label{yinli11}
For a connected undirected graph $\mathcal{G}$, $\mathcal{L}$ is positive semidefinite, whose eigenvalues have order as $\lambda_1=0<\lambda_2 \leqslant \cdots \leqslant \lambda_N$. Also, $\mathcal{L} \mathbf{1}_N=\mathbf{0}_N$ and $\mathbf{1}_N^{\top} \mathcal{L}=\mathbf{0}_N^{\top}$.
\end{lem}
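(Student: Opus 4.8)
The plan is to work directly from the definition $\mathcal{L} = \mathcal{D} - \mathcal{A}$ and exploit the symmetry inherited from the undirected structure of $\mathcal{G}$. Since the graph is undirected, the weights satisfy $a_{ij} = a_{ji}$, so $\mathcal{A}$ is symmetric; as $\mathcal{D}$ is diagonal, $\mathcal{L}$ is symmetric and therefore has real eigenvalues that can be arranged in nondecreasing order. The whole argument then reduces to controlling the quadratic form $x^\top \mathcal{L} x$, so the first step I would carry out is to derive the edge-sum identity $x^\top \mathcal{L} x = \tfrac{1}{2}\sum_{i=1}^N \sum_{j=1}^N a_{ij}(x_i - x_j)^2$ for every $x = \col(x_1, \dots, x_N) \in \mathbb{R}^N$. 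This follows by expanding the square and using $d_i = \sum_{j=1}^N a_{ij}$ together with the symmetry $a_{ij} = a_{ji}$ to recombine the diagonal and off-diagonal contributions.

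Next I would verify the two kernel identities and positive semidefiniteness, which are immediate consequences of this setup. Reading off the $i$-th row, $(\mathcal{L}\mathbf{1}_N)_i = d_i - \sum_{j=1}^N a_{ij} = 0$, so $\mathcal{L}\mathbf{1}_N = \mathbf{0}_N$; transposing and using $\mathcal{L}^\top = \mathcal{L}$ yields $\mathbf{1}_N^\top \mathcal{L} = \mathbf{0}_N^\top$. In particular $0$ is an eigenvalue with eigenvector $\mathbf{1}_N$, so the smallest eigenvalue satisfies $\lambda_1 = 0$. Positive semidefiniteness is read off directly from the edge-sum identity: since every weight $a_{ij} \geq 0$, each summand $a_{ij}(x_i - x_j)^2$ is nonnegative, whence $x^\top \mathcal{L} x \geq 0$ for all $x$, and all eigenvalues are nonnegative.

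The main obstacle, and the only place where connectedness is genuinely used, is establishing the strict inequality $0 = \lambda_1 < \lambda_2$, i.e. that the zero eigenvalue is simple. I would prove this by characterizing the null space of $\mathcal{L}$. Suppose $x^\top \mathcal{L} x = 0$; by the edge-sum identity this forces $a_{ij}(x_i - x_j)^2 = 0$ for every pair $(i,j)$, and hence $x_i = x_j$ whenever $a_{ij} > 0$, that is, whenever $(j,i) \in \mathcal{E}$. Because $\mathcal{G}$ is connected, any two nodes are joined by a path, and propagating the equality $x_i = x_j$ along the edges of such a path shows that all components of $x$ coincide; thus $x$ is a scalar multiple of $\mathbf{1}_N$. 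Since $\mathcal{L}$ is symmetric and positive semidefinite, its null space equals the eigenspace of the eigenvalue $0$, so this eigenspace is one-dimensional. Consequently no second linearly independent eigenvector can have eigenvalue $0$, giving $\lambda_2 > 0$ and the claimed ordering $\lambda_1 = 0 < \lambda_2 \leq \cdots \leq \lambda_N$. The expected difficulty lies entirely in this connectedness-to-simplicity step, as the remaining assertions are essentially algebraic bookkeeping, and the referenced result \cite{guo2020predefined} should follow along this route.
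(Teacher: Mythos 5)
Your proof is correct and complete: the symmetry of $\mathcal{L}$, the quadratic-form identity $x^\top \mathcal{L} x = \tfrac{1}{2}\sum_{i=1}^N\sum_{j=1}^N a_{ij}(x_i-x_j)^2$, the kernel computation giving $\mathcal{L}\mathbf{1}_N=\mathbf{0}_N$, and the connectedness argument showing the zero eigenvalue is simple constitute the standard route, and you correctly isolate the only genuinely nontrivial step (that $\lambda_2>0$ requires connectedness). Note that the paper itself provides no proof of this lemma---it is imported as a known result from \cite{guo2020predefined}---so there is no internal argument to compare against; your self-contained derivation is consistent with, and more detailed than, what the paper relies on.
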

\subsection{Operator Theory}
\begin{definition}\cite{Zhu1991} A few basic definitions on convex analysis are displayed,\begin{itemize}
		\item If for any $x_{1}, x_{2} \in \mathbb{R}^{p},k \in[0,1],$ there is $f(k x_{1}+(1-k)x_{2})\leqslant k f(x_{1})+(1-k) f(x_{2}),$ then $f:\mathbb{R}^{p}\rightarrow\mathbb{R}$ is called convex.
		\item If for any $x_{1}, x_{2} \in \mathbb{R}^{p},k \in[0,1],$ there is $f(k x_{1}+(1-k)x_{2})\leqslant k f(x_{1})+(1-k) f(x_{2})-\frac{m}{2}k(1-k)\Vert x_{1}-x_{2}\Vert^{2}$, then $f:\mathbb{R}^{p}\rightarrow\mathbb{R}$ is called $m$-strongly convex.
\item If there exists a constant $\theta>0$ such that $\left\| f(x_{1})-f(x_{2})\right\|\leqslant\theta\Vert x_{1}-x_{2} \Vert$, for any $x_{1}, x_{2}\in\mathbb{R}^{p}$, then $f:\mathbb{R}^{p}\to\mathbb{R}$ is $\theta$-Lipschitz continuous.
\item It there exists a constant $\mu>0$ such that $(x_1-x_2)^{\top}(g(x_1)-g(x_2))\geqslant \mu\Vert x_1-x_2\Vert^{2}$, for any $x_{1}, x_{2}\in\mathbb{R}^{p}$, then $g:\mathbb{R}^{p}\to\mathbb{R}^{p}$ is $\mu$-strongly monotone.
	\end{itemize}\end{definition}
\begin{definition}\cite{Bauschke2011}
 For a closed convex set $S \subseteq \mathbb{R}^{p}$, $P_S(\cdot): \mathbb{R}^{p} \rightarrow S$ is the projection on $S$. $\mathbf{T}_S(\cdot): S \rightrightarrows\mathbb{R}^{p}: x \mapsto$ $\operatorname{cl}\left(\bigcup_{\delta>0} \frac{1}{\delta}(S-x)\right)$ is the tangent cone of $S$. The projection on the tangent cone of $S$ at $x$ is defined as $\Pi_S\left[x,v\right]:=P_{\mathbf{T}_S(x)}(v)=\lim _{\delta \rightarrow 0^{+}} \frac{P_S(x+\delta v)-x}{\delta}$.
\end{definition}
\begin{lem}\cite{Cherukuri2016}\label{tou22}
	Let $S \subseteq \mathbb{R}^{p}$ be a nonempty closed convex set. For any $x_1, x_2 \in S$ and $\xi \in \mathbb{R}^{p}$, it gives that $\left(x_1-x_2\right)^{\top} \Pi_S(x_1, \xi) \leqslant\left(x_1-x_2\right)^{\top} \xi$.
\end{lem}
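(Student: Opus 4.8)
The plan is to reduce the statement to the standard variational characterization of the metric projection onto a convex set and then pass to the limit in the definition of $\Pi_S$. Recall that for a nonempty closed convex set $S$ and any $y\in\mathbb{R}^m$, the projection $P_S(y)$ is characterized by the obtuse-angle inequality $(y-P_S(y))^{\top}(z-P_S(y))\leq 0$ for every $z\in S$. First I would apply this with $y=x_1+\delta\xi$ for an arbitrary $\delta>0$ and the admissible test point $z=x_2\in S$, abbreviating $p_\delta:=P_S(x_1+\delta\xi)$ and $w_\delta:=p_\delta-x_1$. Since $x_1\in S$ we have $P_S(x_1)=x_1$, so continuity of $P_S$ forces $w_\delta\to\mathbf{0}$ as $\delta\to0^{+}$, while by the defining formula for the tangent-cone projection $w_\delta/\delta\to\Pi_S(x_1,\xi)$.

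Next I would rewrite the projection inequality in terms of $w_\delta$ and $u:=x_1-x_2$. Using $x_1+\delta\xi-p_\delta=\delta\xi-w_\delta$ and $x_2-p_\delta=-u-w_\delta$, the inequality $(\delta\xi-w_\delta)^{\top}(-u-w_\delta)\leq0$ expands and rearranges to $w_\delta^{\top}u+\Vert w_\delta\Vert^{2}\leq\delta\,\xi^{\top}u+\delta\,\xi^{\top}w_\delta$. Dividing by $\delta>0$ and discarding the nonnegative term $\Vert w_\delta\Vert^{2}/\delta$ from the left-hand side yields the clean estimate $\tfrac{1}{\delta}\,w_\delta^{\top}u\leq\xi^{\top}u+\xi^{\top}w_\delta$.

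Finally I would let $\delta\to0^{+}$. The left-hand side tends to $u^{\top}\Pi_S(x_1,\xi)$ by the definition of $\Pi_S$, whereas $\xi^{\top}w_\delta\to0$ because $w_\delta\to\mathbf{0}$; this delivers exactly $(x_1-x_2)^{\top}\Pi_S(x_1,\xi)\leq(x_1-x_2)^{\top}\xi$. The only genuinely delicate point is this limit passage: one must be sure that the limit defining $\Pi_S(x_1,\xi)$ exists and that $p_\delta\to x_1$, both of which follow from the firm nonexpansiveness (hence continuity) of $P_S$ on a closed convex set together with $P_S(x_1)=x_1$. Everything else is elementary algebra, and the decisive idea is simply to throw away the favorable quadratic term $\Vert w_\delta\Vert^{2}/\delta\geq0$ \emph{before} taking the limit, which is what turns the exact projection inequality into the one-sided bound claimed in the lemma.
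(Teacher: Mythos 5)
Your argument is correct, but it takes a genuinely different route from the proof behind the paper's citation (the paper itself states this lemma without proof, attributing it to the cited reference). The standard proof there is a one-line consequence of Moreau's decomposition: writing $\xi=P_{\mathbf{T}_S(x_1)}(\xi)+\eta$ with $\eta=P_{\mathbf{N}_S(x_1)}(\xi)$ in the normal cone $\mathbf{N}_S(x_1)=\{v\in\mathbb{R}^m: v^{\top}(z-x_1)\leqslant 0,\ \forall z\in S\}$, one gets $(x_1-x_2)^{\top}\Pi_S(x_1,\xi)=(x_1-x_2)^{\top}\xi-(x_1-x_2)^{\top}\eta\leqslant(x_1-x_2)^{\top}\xi$, because $x_2\in S$ forces $(x_2-x_1)^{\top}\eta\leqslant 0$. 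You instead work directly with the limit formula $\Pi_S(x_1,\xi)=\lim_{\delta\to 0^{+}}\frac{P_S(x_1+\delta\xi)-x_1}{\delta}$ and the obtuse-angle characterization of $P_S$ tested at $z=x_2$; your algebra is right, and discarding the favorable term $\Vert w_\delta\Vert^{2}/\delta\geqslant 0$ before passing to the limit is exactly the correct move, with $\xi^{\top}w_\delta\to 0$ killing the residual term. What each approach buys: yours is elementary and self-contained given the paper's definition of $\Pi_S$ as a limit, needing only nonexpansiveness of $P_S$; the Moreau route is shorter and exhibits the slack in the inequality as precisely the nonnegative quantity $(x_1-x_2)^{\top}P_{\mathbf{N}_S(x_1)}(\xi)$. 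One small correction to your closing remark: the existence of the limit defining $\Pi_S(x_1,\xi)$ does \emph{not} follow from (firm) nonexpansiveness of $P_S$ alone — nonexpansiveness gives you $p_\delta\to x_1$, but directional differentiability of the projection at points of $S$, i.e.\ the identity $\lim_{\delta\to 0^{+}}\frac{P_S(x_1+\delta\xi)-x_1}{\delta}=P_{\mathbf{T}_S(x_1)}(\xi)$, is a deeper fact (Zarantonello's theorem). In this paper that identity is adopted as the \emph{definition} of $\Pi_S$, so you are entitled to invoke it, but you should not present it as a consequence of nonexpansiveness.
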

\subsection{Online Noncooperative Game}
Consider the following constrained online noncooperative game $\Gamma\triangleq\left\{\mathcal{V},J_{i},\Omega_{i}\cap U(t)\right\}$ with $N$ players interacting over an undirected connected graph $\mathcal{G}=\left\{\mathcal{V},\mathcal{E}\right\}$. For each $i\in\mathcal{V}$, the $i$th player's action profile is defined as $x_{i}\in\mathbb{R}^{p_i}$ and $J_{i}(t,x):\mathbb{R}\times\mathbb{R}^{p}\rightarrow\mathbb{R}$ denotes the cost function of the $i$th player, where $p=\sum_{i=1}^Np_i$ and $x=\text{col}\left(x_{1},x_{2},\cdots,x_{N}\right)$ denotes the column vector of all players' action profile. Also, the action profile can be usually rewritten as $x=\col(x_i,x_{-i})$, where $x_{-i}\triangleq\text{col}\left(x_{1},x_{2},\cdots,x_{i-1},x_{i+1},\cdots,x_{N}\right)$. Owing to the regulation originated from reality, the action profiles of players are often restricted by each other. Thus, a coupling nonlinear inequality constraint denoted by the set $U(t)=\left\{x\in\mathbb{R}^{p}\mid\sum_{i=1}^{N}g_{i}(t,x_{i})\leqslant\mathbf{0}_{q},g_{i}(t,\cdot):\mathbb{R}^{p_i}\rightarrow\mathbb{R}^{q}\right\}$ and private closed convex set constraints $\Omega_{i}$ are considered in the constrained online game $\Gamma$.
\par
Specifically, the $i$th player's ultimate goal is to minimize its own cost function over a period of time $[0, T ]$ as follows
\begin{equation}\label{pb}
\begin{aligned}
	\min\limits_{x_{i}\in\Omega_i}&\int_{0}^{T}J_{i}(t,x_{i},x_{-i})\text{d}t\\
	\text{s.t. }&\left(x_{i},x_{-i}\right)\in \Omega\cap U(t),
\end{aligned}
\end{equation}
where $\Omega=\Omega_1\times\Omega_2\times\cdots\times\Omega_N$.
\begin{rem}
The considered time-varying cost functions and coupling constraints arise naturally in many real-world applications, such as demand response in commercial buildings \cite{lesage2020dynamic}, dynamic sparse recovery \cite{dixit2020online}, estimation in sensor networks \cite{hosseini2013online}, target tracking in 2D planes \cite{shahrampour2017distributed}, and Nash-Cournot markets \cite{Lu2020}. For instance, in an online Nash-Cournot market, each firm determines its production quantity to minimize the cumulative production cost while maximizing profit. The cost function is time-varying, as the market demand price and marginal production cost fluctuate with time due to dynamic economic factors. Meanwhile, the market capacity constraint, which limits the total production of all firms, also evolves over time, reflecting changes in resource availability or market saturation. It is worth noting that problem \eqref{pb} formulates an online game, where each player optimizes its own time-varying cost function. The integral form of the cost functions represents the cumulative cost accumulated over time, capturing the continuously changing environment, such as fluctuating prices. It is natural to interpret this formulation within the framework of online learning, where each player sequentially updates its strategy based on the observed information, aiming to minimize the cumulative cost in hindsight, due to the difficulty in directly solving the time-varying problem. This formulation is closely related to the literature on online optimization and online game \cite{li2023survey,yu2024distributed}.
\end{rem}

In the context of the online noncooperative game, the offline best response to problem \eqref{pb} known as the GNE is considered. The definition is shown as follows.

\begin{definition}
	An action profile $x^{*}=\text{col}\left(x_{i}^{*},x_{-i}^{*}\right)\in\Omega\cap U(t)$ is a GNE of the online game \eqref{pb}, if for any $t\in[0,T]$, $i\in\mathcal{V}$ and $x_i:\left(x_{i},x_{-i}^{*}\right)\in\Omega\cap U(t)$, there is
	\begin{equation*}
		\int_{0}^{T}J_{i}(t,x_{i}^{*},x_{-i}^{*})\text{d}t\leqslant\int_{0}^{T}J_{i}(t,x_{i},x_{-i}^{*})\text{d}t.
	\end{equation*}
\end{definition}
\begin{rem}
Note that determining all GNEs is challenging, as each player's decision-making process is influenced by its competitors. One subclass of GNEs, referred to as VGNEs \cite{facchinei2007generalized}, has been extensively investigated due to their favorable economic interpretation in the absence of price discrimination and their better stability properties. Another motivation for studying VGNEs lies in their close connection with variational inequality problems. Specifically, the VGNE $x^{*}$ of the online game \eqref{pb} corresponds to the solution $x^{*} \in \Omega \cap U(t)$ of the following variational inequality
$$
\left\langle \int_{0}^{T}F\left(t,x^{*}\right)\text{d}t,\left(x-x^{*}\right)\right\rangle \geqslant 0, \forall x \in \Omega \cap U(t),
$$
where $F=\text{col}(\partial_{x_1} J_{1}(t,x_{1},x_{-1}),\cdots,\partial_{x_N} J_{N}(t,x_{N},x_{-N}))$. According to \cite{Lu2020}, $x^*$ satisfying the above condition is indeed a VGNE of the online game \eqref{pb}.
\end{rem}
\begin{assum}\label{jiashe11}
The cost function $J_{i}(t,x_{i},x_{-i})$ and the constraint function $g_{i}(t,x_{i})$ in \eqref{pb} are integrable with respect to $t\in\left[0,T\right]$, convex with respect to $x_i$ and Lipschitz continuous over the set $\Omega_{i}$. The pseudo-gradient $F(t,x)$ is $l$-Lipschitz continuous and $m$-strongly monotone with respect to $x$. 
\end{assum}
\begin{assum}\label{jiashe22}
The set of feasible action profile $\Omega^\flat=\{x~|~x\in\Omega, \sum_{i=1}^{N}g_{i}(t,x_{i})\leqslant\mathbf{0}_{q}, t\in[0,T]\}$ is non-empty.
\end{assum}
\begin{rem}\label{youjie}
Assumption \ref{jiashe11} ensures the existence of a VGNE for the online game \eqref{pb}, which is reasonable since practical outputs, such as voltage, typically operate within a certain range. The strong monotonicity of the pseudo-gradient $F$ of is a general condition for most of the NE seeking algorithms for static games \cite{shi2023distributed,ma2022fully,ye2017distributed,gadjov2019distributed,Yang2024,Zeng2019}, which guarantees the algorithms' effectiveness. Also, based on this condition, we can get for any $x$ and $y$, there is $(x-y)^{\top}(F(t,x)-F(t,y))=\sum_{i=1}^N(x_i-y_i)^\top(\partial_{x_i} J_{i}(t,x_{i},x_{-i})-\partial_{y_i} J_{i}(t,y_{i},y_{-i}))\geqslant m \sum_{i=1}^N\|x_i-y_i\|^2.$ Thus, by letting $x_j=y_j$ for $j\neq i$, there is $\left(x_i-y_i\right)^\top\left(\partial_{x_i} J_{i}(t,x_{i},x_{-i})-\partial_{y_i} J_{i}(t,y_{i},y_{-i})\right)\geqslant m \left\|x_i-y_i\right\|^2,$ which implies that $J_i(t, x_i, x_{-i})$ is $m_i$-strongly convex with respect to $x_i$ for $m_i\geqslant m$. In addition, based on Assumption \ref{jiashe11}, the boundedness of $J_i$ and $g_i$ can be inferred, i.e., there exist positive constants $K_{f}$ and $K_{g}$, such that $\Vert J_{i}(t,x_{i},x_{-i})\Vert\leqslant K_{f}$ and $\Vert g_{i}(t,x_{i})\Vert\leqslant K_{g}$ hold. Although Assumption \ref{jiashe11} requires the cost functions to be integrable over any finite interval $[0,T]$, this does not necessarily guarantee that the total accumulated cost remains finite as $T \to \infty$. In fact, the main objective of this paper is to investigate the time-averaged performance of the algorithms over a finite-time horizon, and thus, $T$ is typically confined within a finite range. Nevertheless, since the cost functions are expressed in an integral form, it is reasonable to ensure the rationality of the cost functions. Therefore, one may impose a stronger condition $\int_0^{\infty}\|J_i(t, x_i, x_{-i})\| \text{d} t<\infty$.
\end{rem}
\begin{rem}\label{juhe}
As a special kind of noncooperative game, the online aggregative game can also be similarly defined. Consider an aggregative function $\sigma(x)=\frac{1}{N}\sum_{i=1}^{N}\vartheta_{i}(x_{i})$ constituted by linear mappings $\vartheta_{i}(x_{i}):\mathbb{R}^{p_i}\rightarrow\mathbb{R}^n$, which signify the $i$th player's local contribution to the aggregative function. Then, the $i$th player's cost function can be reformulated as $J_{i}(t,x_{i},x_{-i})=f_{i}(t,x_{i},\sigma(x))$ with a known function $f_{i}:\mathbb{R}\times\mathbb{R}^{p_i}\times\mathbb{R}^{n}\rightarrow\mathbb{R}$.
\end{rem}
\subsection{Performance Metrics}\label{refit}
Due to the time-varying nature of the cost functions and constraints, computing the instantaneous time-varying VGNE may be time-consuming. Thus, two performance metrics, regret $\mathcal{R}$ and fit $\mathcal{F}$, are established to assess the performance of the VGNE seeking algorithms for the online game \eqref{pb}. The regret is given as
\begin{equation*} \mathcal{R}^{\top}=\int_{0}^{T}\sum_{i=1}^{N}\left(J_{i}(t,x_{i}(t),x_{-i}^{*})-J_{i}(t,x_{i}^{*},x_{-i}^{*})\right)\text{d}t,
\end{equation*}
where $x^*$ is the VGNE of online game \eqref{pb}. This integral-type regret serves as a continuous-time analogue of the classical cumulative regret widely used in online game and optimization \cite{li2023survey}, where the discrete summation is replaced by time integration. It measures the cumulative cost gap between the instantaneous players' actions and the VGNE. Similar definitions can be found in \cite{yu2024distributed}. Following the standard notion in \cite{li2023survey}, the proposed algorithm is deemed ``good'' if $\mathcal{R}^\top$ grows sublinearly with respect to $T$, i.e., $\frac{\mathcal{R}^\top}{T} \to 0$ as $T \to \infty$.

Then, the fit measuring the cumulative constraint violation is defined as
\begin{equation*} \mathcal{F}^{\top}=\left\|\left[\int_{0}^{T}\sum_{i=1}^{N}g_{i}(t,x_{i})\text{d}t\right]_{+}\right\|.
\end{equation*}
This integral-type fit also serves as a continuous-time analogue of the discrete-type fit commonly used in online learning \cite{li2023survey}, measuring cumulative constraint violation. It evaluates how closely the output trajectories $x(t)$ by the VGNE seeking algorithms comply with the imposed constraints over time. Similar definitions can also be found in \cite{yu2024distributed}. This definition allows for temporary violations to be offset by subsequent feasible actions, which is particularly meaningful for accumulated quantities such as average power \cite{paternain2016online}. Similar to the regret, an algorithm is considered effective if the fit grows sublinearly with respect to $T$.

Define $g_{i, j}(t, \cdot): \mathbb{R}^{p_i} \rightarrow \mathbb{R}$ as the $j$th component of $g_i(t, \cdot)$, i.e., $g_i(t, \cdot)=\operatorname{col}\left(g_{i, 1}(t, \cdot), \cdots, g_{i, q}(t, \cdot)\right)$. Then, define $\mathcal{F}_j^\top:=\int_0^T \sum_{i=1}^N g_{i, j}\left(t, x_i\right) \text{d} t$ as the $j$th component of the constraint integral. It gives that $\mathcal{F}^\top=\sqrt{\sum_{j=1}^q\left[\mathcal{F}_j^\top\right]_{+}^2}.$
\section{Main Results}\label{MR}
In this section, two continuous-time distributed VGNE seeking algorithms are first developed for the online noncooperative and aggregative games and their performance analyses are provided. Then, an event-triggered VGNE seeking algorithm is proposed for the online noncooperative game to reduce the communication load. Finally, the effect of measurement noise on the algorithm's performance is investigated.
\subsection{Continuous-Time VGNE Seeking Algorithm}
In contrast to general distributed optimization problems, in a noncooperative game, each player must acquire information about the actions of all other players to evaluate its own cost. This poses a challenge in distributed scenarios. To address this issue, a passivity-based estimation method, inspired by \cite{romano2019dynamic}, is employed. Let $\Upsilon^{i}_{j}$ denotes $i$th player's estimation on $j$th player's action $x_{j}$. Denote $\Upsilon^{i}=\text{col}\left(\Upsilon^{i}_{1},\cdots,\Upsilon^{i}_{N}\right)$ and $\Upsilon^{i}_{-i}=\text{col}\left(\Upsilon^{i}_{1}, \cdots, \Upsilon^{i}_{i-1},\Upsilon^{i}_{i+1},\cdots,\Upsilon^{i}_{N}\right)$. Clearly, it can be deduced that $\Upsilon^{i}_i=x_i$.
\begin{corollary}\label{lip_extended}\cite{Bianchi2021}
The extended pseudo-gradient $\mathbf{F}:=\text{col}(\partial_{x_{1}}J_{1}(t,x_{1},\Upsilon^{1}_{-1}),\cdots,\partial_{x_{N}}J_{N}(t,x_{N},\Upsilon^{N}_{-N}))$ is $l_{0}$-Lipschitz continuous, for some $l_{0}\in[m,l]$.
\end{corollary}

A novel continuous-time distributed VGNE seeking algorithm for constrained online game \eqref{pb} is proposed as follows
\begin{equation}\label{algo1}
	\left\{
	\begin{aligned}
		\dot{x}_{i}  = &\Pi_{\Omega_{i}}[x_{i},-\partial_{x_{i}}J_{i}(t,\Upsilon^{i})-\mu_{i}^{\top}\partial_{x_i} g_{i}(t,x_{i})\\
&-kR_{i}\sum_{j\in\mathcal{N}_{i}}(\Upsilon^{i}-\Upsilon^{j})],\\
		\dot{\Upsilon}_{-i}^{i}  =& -kS_{i}\sum_{j\in\mathcal{N}_{i}}(\Upsilon^{i}-\Upsilon^{j}),\\
		\dot{\mu}_{i}  =& \Pi_{\mathbb{R}_{\geqslant0}^{q}}[\mu_{i},g_{i}(t,x_{i})-K_{\mu}\sum_{j\in\mathcal{N}_{i}}\text{sgn}(\mu_{i}-\mu_{j})],
	\end{aligned}
	\right.
	\end{equation}
	where $\partial_{x_i} g_{i}(t,x_{i})=\frac{\partial g_i(t,x_i)}{\partial x_i}$ represents the Jacobian of the local constraint function $g_{i}(t,x_{i})$ with respect to $x_i$, $k,K_{\mu}>0$ are constants to be defined later, $$
	R_{i}=\begin{bmatrix}
		\mathbf{0}_{p_i\times n_{<i}} & \mathbf{I}_{p_i}&\mathbf{0}_{p_i\times n_{>i}}
	\end{bmatrix}, $$$$
	S_{i}=\begin{bmatrix}
		\mathbf{I}_{n_{<i}} & \mathbf{0}_{n_{<i}\times p_i}&\mathbf{0}_{n_{<i}\times n_{>i}}\\
		\mathbf{0}_{n_{>i}\times n_{<i}}&\mathbf{0}_{n_{>i}\times p_i}&\mathbf{I}_{n_{>i}}
	\end{bmatrix},
	$$
with $n_{<i}=\sum_{j<i, i, j \in \mathcal{V}} p_j$ and $n_{>i}=\sum_{j>i, i, j \in \mathcal{V}} p_j$.

In a static game, the KKT conditions coincide with the static VGNE conditions, and a pseudo-gradient-based algorithm can seek the VGNE. However, in an online game with time-varying costs, exact tracking of the instantaneous VGNE is generally more complex; instead, sublinear regret ensures that the time-average performance of the algorithm asymptotically matches that of the offline static VGNE, i.e., $\frac{\mathcal{R}^\top}{T}\rightarrow0$, indicating that the online algorithm is ``good'' on average. 

Also, in a time-varying game that aims to track the instantaneous time-varying VGNE, one typically needs access to temporally dependent terms (e.g., $\partial_t J_i(t,x_i,x_{-i})$ and $\partial_t g_i(t,x_i)$) or to design additional measurement/estimation mechanisms. Such schemes are generally computationally intensive and often rely on global information or stronger assumptions. In contrast, the online-regret viewpoint adopted here eliminates the need for $\partial_t$ terms and Hessian matrices, thereby achieving a distributed implementation with lower computational complexity by guaranteeing sublinear regret (hence vanishing time-average performance gap).

	\begin{rem}\label{juhe_algo}
	As is discussed in Remark \ref{juhe}, when dealing with the online aggregative game, a distributed VGNE seeking algorithm similar to \eqref{algo1} can be formulated as
	\begin{equation}\label{algo_juhe}
		\left\{
		\begin{aligned}
			\dot{x}_{i}  = &\Pi_{\Omega_{i}}[x_{i},-\Phi_{i}(t,x_{i},\phi_{i})-\mu_{i}^{\top}\partial_{x_i} g_{i}(t,x_{i})],\\
			\dot{\mu}_{i}=& \Pi_{\mathbb{R}_{\geqslant0}^{q}}[\mu_{i},g_{i}(t,x_{i})-K_{\mu}\sum_{j\in\mathcal{N}_{i}}\text{sgn}(\mu_{i}-\mu_{j})],\\
			\dot{\psi}_{i}=&-\omega[\sum_{j\in\mathcal{N}_{i}}\text{sgn}(\phi_{i}-\phi_{j})],\\
			\phi_{i}=&\psi_{i}+\vartheta_{i}(x_{i}),
		\end{aligned}
		\right.
	\end{equation}
	where $\omega>0$ is a constant, the $i$th player's local estimation on the gradient information $\partial_{x_{i}}f_{i}(t,x_{i},\sigma(x))$ is denoted by $$\Phi_{i}(t,x_{i},\phi_{i})\triangleq(\partial_{x_i}f_i(t,\cdot,\sigma)+\frac{1}{N}\partial_\sigma f_i^\top(t,x_i,\cdot)\partial\vartheta_i)|_{\sigma=\phi_i},$$ and it is apparent that $\Phi_{i}(t,x_{i},\phi_{i})=\partial_{x_{i}}f_{i}(t,x_{i},\sigma(x))$ when $\phi_{i}=\sigma(x)$. Besides, all the other symbols in algorithm \eqref{algo_juhe} are the same as those in algorithm \eqref{algo1}. Let $\sup_{t\in[0,+\infty)}\Vert\dot{\vartheta}_i(x_i)\Vert\leqslant\iota$. According to Theorem 2 in \cite{Chen2012}, when $\omega>(N-1)\iota$, there exists a time bound $T_{0}$ such that $\lim_{t\rightarrow T_0}\Vert \phi_i(t)-\sigma(x(t))\Vert=0$ for any $i\in\mathcal{V}$. 
	\end{rem}
\begin{thm}\label{dingli11}
Suppose that Assumptions \ref{jiashe11} and \ref{jiashe22} hold. For any $T\geqslant0$, when $x(0)\in\Omega$, $\mu(0)=\mathbf{0}_{Nq}$, $m+\frac{m_{0}}{2}>l$, $k>\frac{(l-l_{0})^{2}+(4m+2m_{0})l_{0}}{(4m+2m_{0}-4l)\lambda_{2}(\mathbf{L})}$ and $K_\mu\geqslant NK_g$, based on the VGNE seeking algorithm \eqref{algo1}, the following regret and fit bounds hold
\begin{equation*}
	\left\{\begin{aligned}
		\mathcal{R}^{\top}&\leqslant \frac{1}{2}\Vert\Upsilon(0)-\Upsilon^{*}\Vert^{2},\\
		\mathcal{F}^{\top}&\leqslant 2N\sqrt{K_{f}T}+\sqrt{N}\Vert \Upsilon(0)-\Upsilon^{*}\Vert,
	\end{aligned}
	\right.
\end{equation*}
in which $m_0=\min_{i\in\mathcal{V}}\{m_i\}$, $\Upsilon=\text{col}(\Upsilon^{1},\cdots,\Upsilon^{N}), \mu=\text{col}(\mu_{1},\cdots,\mu_{N}), \Upsilon^{*}=\mathbf{1}_{N}\otimes x^{*},$ $x^*$ is the VGNE of online game \eqref{pb}, $\mu^{*}=\mathbf{1}_{N}\otimes\bar{\mu}^{*}$, $\bar{\mu}^{*}\in\mathbb{R}_{\geqslant0}^{q}$ is a nonnegative vector, which can be arbitrarily chosen and $\mathbf{L}=\mathcal{L}\otimes \mathbf{I}_{Np}$. In other words, we have $\mathcal{R}^{\top}=\mathcal{O}(1)$ and $\mathcal{F}^{\top}=\mathcal{O}(\sqrt{T})$.
\end{thm}
\begin{proof}
Select the candidate Lyapunov function as follows $$V(t)=\frac{1}{2}\left\|\Upsilon-\Upsilon^{*}\right\|^{2}+\frac{1}{2}\left\|\mu-\mu^*\right\|^{2}.$$
\par
Taking the time derivative of $V$ along the direction of \eqref{algo1} and based on Lemma \ref{tou22}, it yields that
\begin{equation*}\label{dv}
\begin{aligned}
\dot{V}=&(x-x^{*})^{\top}\dot{x}+\sum_{i=1}^N(\Upsilon^i_{-i}-\Upsilon_{-i}^{i*})^{\top}\dot{\Upsilon}^i_{-i}+(\mu-\mu^{*})^{\top}\dot{\mu}\\
\leqslant&\sum_{i=1}^{N}(x_{i}-x_{i}^{*})^{\top}\left[-\partial_{x_{i}}J_{i}(t,\Upsilon^{i})-kR_{i}\sum_{j=1}^{N}a_{ij}(\Upsilon^{i}-\Upsilon^{j})\right]\\
&-\sum_{i=1}^{N}(x_{i}-x_{i}^{*})^{\top}\mu_{i}^{\top}\partial_{x_i} g_{i}(t,x_{i})\\
&+\sum_{i=1}^{N}(\mu_{i}-\mu_{i}^{*})^{\top}\left[g_{i}(t,x_{i})-K_{\mu}\sum_{j=1}^{N}a_{ij}\text{sgn}(\mu_{i}-\mu_{j})\right]\\
&+\sum_{i=1}^{N}(\Upsilon_{-i}^{i}-\Upsilon_{-i}^{i^{*}})^{\top}\left[-kS_{i}\sum_{j=1}^{N}a_{ij}(\Upsilon^{i}-\Upsilon^{j})\right].
\end{aligned}
\end{equation*}
\par
To facilitate the subsequent analysis, deploy $V_{1}$ and $V_{2}$ to represent the terms in the above formula with
\begin{equation*}
\begin{aligned}
	V_{1}=&\sum_{i=1}^{N}(x_{i}^{*}-x_{i})^{\top}\left[\partial_{x_{i}}J_{i}(t,\Upsilon^{i})+kR_{i}\sum_{j=1}^{N}a_{ij}(\Upsilon^{i}-\Upsilon^{j})\right]\\
	&+\sum_{i=1}^{N}(\Upsilon_{-i}^{i}-\Upsilon_{-i}^{i^{*}})\left[-kS_{i}\sum_{j=1}^{N}a_{ij}(\Upsilon^{i}-\Upsilon^{j})\right],
\end{aligned}
\end{equation*}
and
\begin{equation*}
	\begin{aligned}
	V_{2}=&\sum_{i=1}^{N}(x_{i}^{*}-x_{i})^{\top}\mu_{i}^{\top}\partial_{x_i} g_{i}(t,x_{i})\\
	&+\sum_{i=1}^{N}(\mu_{i}-\mu_{i}^{*})^{\top}\left[g_{i}(t,x_{i})-K_{\mu}\sum_{j=1}^{N}a_{ij}\text{sgn}(\mu_{i}-\mu_{j})\right].
	\end{aligned}
\end{equation*}
As for $V_{1}$, it can be reformulated as
\begin{equation}\label{dv1}
	\begin{aligned}
		V_{1}=&\sum_{i=1}^{N}(x_{i}^{*}-x_{i})^{\top}\partial_{x_{i}}J_{i}(t,\Upsilon^{i})-k(\Upsilon-\Upsilon^{*})^{\top}\mathbf{L}(\Upsilon-\Upsilon^{*})\\
		=&\sum_{i=1}^{N}(x_{i}^{*}-x_{i})^{\top}\left(\partial_{x_{i}}J_{i}(t,\Upsilon^{i})-\partial_{x_{i}}J_{i}(t,\Upsilon^{i^{*}})\right)\\
		&+\sum_{i=1}^{N}(x_{i}^{*}-x_{i})^{\top}\left(\partial_{x_{i}}J_{i}(t,x_{i}^{*},x_{-i}^{*})-\partial_{x_{i}}J_{i}(t,x_{i},x_{-i}^{*})\right)\\
		&+\sum_{i=1}^{N}(x_{i}^{*}-x_{i})^{\top}\partial_{x_{i}}J_{i}(t,x_{i},x_{-i}^{*})\\
		&-k(\Upsilon-\Upsilon^{*})^{\top}\mathbf{L}(\Upsilon-\Upsilon^{*}).
	\end{aligned}
\end{equation}
On account of Assumption \ref{jiashe11}, it can be further scaled that 
\begin{equation}\label{dv11}
	\begin{aligned}
		&\sum_{i=1}^{N}(x_{i}^{*}-x_{i})^{\top}\left(\partial_{x_{i}}J_{i}(t,\Upsilon^{i})-\partial_{x_{i}}J_{i}(t,\Upsilon^{i^{*}})\right)\\
		\leqslant&\frac{l_{0}+l}{\sqrt{N}}\Vert \Upsilon^{\parallel}-\Upsilon^{*}\Vert\Vert\Upsilon^{\perp}\Vert-\frac{m}{N}\Vert\Upsilon^{\parallel}-\Upsilon^{*}\Vert^{2}+l_{0}\Vert\Upsilon^{\perp}\Vert^{2},
	\end{aligned}
\end{equation}
where $\Upsilon=\Upsilon^{\|}+\Upsilon^{\perp}$ with $\Upsilon^{\perp} \in \Psi^{\perp}$ and $\Upsilon^{\|} \in \Psi^{\|}$ with $\Psi^{\|}=\left\{\mathbf{1}_N \otimes \gamma \mid \gamma \in \mathbb{R}^{Np}\right\}$ and $\Psi^{\perp}$ is the orthogonal complement space of $\Psi^{\|}$. The detailed process of \eqref{dv11} is similar to the proof of Lemma 4 in \cite{Bianchi2021}, and thus it is omitted herein. Then, owing to the Lipschitz continuity of $\partial_{x_{i}}J_{i}(t,x_{i},x_{-i})$, it can be further deduced that
\begin{equation}\label{v1_lip}
	\begin{aligned}
	&\sum_{i=1}^{N}(x_{i}^{*}-x_{i})^{\top}\left(\partial_{x_{i}}J_{i}(t,x_{i}^{*},x_{-i}^{*})-\partial_{x_{i}}J_{i}(t,x_{i},x_{-i}^{*})\right)\\
	\leqslant&l\Vert x-x^{*}\Vert^{2}.
	\end{aligned}
\end{equation}
What's more, by referring to the strong convexity of $\partial_{x_{i}}J_{i}(t,x_{i},x_{-i})$ with respect to $x_i$, it can be reached that 
\begin{equation}\label{strong_convex}
	\begin{aligned}
	&\sum_{i=1}^{N}(x_{i}^{*}-x_{i})^{\top}\partial_{x_{i}}J_{i}(t,x_{i},x_{-i}^{*})\\
	\leqslant&\sum_{i=1}^{N}[J_{i}(t,x_{i}^{*},x_{-i}^{*})-J_{i}(t,x_{i},x_{-i}^{*})]-\frac{m_{0}}{2}\Vert x-x^{*}\Vert^{2}.
	\end{aligned}
\end{equation}
Meanwhile, according to Lemma \ref{yinli11}, it yields that $\mathbf{L}\Upsilon^{\parallel}=\mathbf{L}\Upsilon^{\parallel^{*}}=\mathbf{0}_{N^2p}$. Thus, the following inequality holds
	\begin{equation}\label{tzz}
-k(\Upsilon-\Upsilon^{*})^{\top}\mathbf{L}(\Upsilon-\Upsilon^{*})\leqslant-k\lambda_{2}(\mathbf{L})\Vert\Upsilon^{\perp}\Vert^{2}.
\end{equation}
Taking \eqref{dv11}-\eqref{tzz} back into \eqref{dv1}, it can be derived that
\begin{equation*}
	V_{1}\leqslant-\bar{\mathcal{R}}^{\top}-\lambda_{\min}(\mathbf{M})\Vert\Upsilon-\Upsilon^{*}\Vert^{2},
\end{equation*}
where $\bar{\mathcal{R}}^{\top}=\sum_{i=1}^{N}\left[J_{i}(t,x_{i},x_{-i}^{*})-J_{i}(t,x_{i}^*,x_{-i}^{*})\right]$ and the matrix $\mathbf{M}$ is defined as
\begin{equation*}
	\mathbf{M}=\begin{bmatrix}
		\frac{2m+m_{0}-2l}{2N}&-\frac{l_{0}+l}{2\sqrt{N}}\\
		-\frac{l_{0}+l}{2\sqrt{N}}&k\lambda_{2}(\mathbf{L})-l_{0}
	\end{bmatrix},
\end{equation*}
which is positive definite based on the parameter setting.

As for $V_{2}$, based on the definition of $\mu^*$, it yields that
	\begin{align}\label{dv2}
		V_{2}\leqslant&\sum_{i=1}^{N}\left(g_{i}(t,x_{i}^{*})-g_{i}(t,x_{i})\right)^{\top}\mu_{i}+\sum_{i=1}^{N}(\mu_{i}-\mu_{i}^{*})^{\top}g_{i}(t,x_{i})\nonumber\\
		&-\frac{K_{\mu}}{2}\sum_{i=1}^{N}\sum_{j=1}^{N}a_{ij}\Vert\mu_{i}-\mu_{j}\Vert_{1}\nonumber\\
		=&\sum_{i=1}^{N}g_{i}(t,x_{i}^{*})^{\top}\mu_{i}-\sum_{i=1}^{N}g_{i}(t,x_{i})^{\top}\mu_{i}^{*}\\
		&-\frac{K_{\mu}}{2}\sum_{i=1}^{N}\sum_{j=1}^{N}a_{ij}\Vert\mu_{i}-\mu_{j}\Vert_{1}\nonumber.
	\end{align}

	\textbf{Step 1}: By letting $\bar{\mu}^{*}=\mathbf{0}_{q}$ and $K_\mu\geqslant NK_g$, based on the Appendix B in \cite{yu2024distributed}, it gives that $V_{2}\leqslant 0$.
\par
Gathering the analysis for $V_{1}$ and $V_{2}$, it holds that
\begin{equation}\label{regret}
	\bar{\mathcal{R}}^{\top}\leqslant-\dot{V}-\lambda_{\min}(\mathbf{M})\Vert\Upsilon-\Upsilon^{*}\Vert^{2}
	\leqslant-\dot{V}.
\end{equation}
By integrating time $t$ from $0$ to $T$ on both sides of \eqref{regret}, it yields that
\begin{equation*}
	\mathcal{R}^{\top}\leqslant-\int_{0}^{T}\dot{V}\text{d}t
	\leqslant\frac{1}{2}\left\|\Upsilon(0)-\Upsilon^{*}\right\|^{2}.
\end{equation*}
Thus it can be concluded that $\mathcal{R}^{\top}=\mathcal{O}(1)$.
\par
\textbf{Step 2}: By letting \begin{equation*}\label{gamma}
		\bar{\mu}^*_i=
		\begin{cases}
			0,&\text{if }\mathcal{F}_{i}^{\top}\leqslant0,\\
			\frac{\mathcal{F}_{i}^{\top}}{N},&\text{if }\mathcal{F}_{i}^{\top}>0,
		\end{cases}
	\end{equation*}where $i\in\left\{1,2,\cdots,q\right\}$ and $\bar{\mu}^*=\col(\bar{\mu}^*_1,\bar{\mu}^*_2,\cdots,\bar{\mu}^*_q)$, \eqref{dv2} can also be scaled as
$
	V_{2}\leqslant-\bar{\mu}^{*\top}\sum_{i=1}^{N}g_{i}(t,x_{i})
$. Therefore, $\dot{V}$ can be further scaled as
\begin{equation*}\label{fitbound}
	\dot{V}\leqslant-\bar{\mathcal{R}}^{\top}-\bar{\mu}^{*\top}\sum_{i=1}^{N}g_{i}(t,x_{i}).
	\end{equation*}Based on the boundedness of $J_{i}$, it can be derived that
	\begin{equation*}
		\frac{1}{N}\Vert\mathcal{F}\Vert^{2}\leqslant 2NK_{f}T+\frac{1}{2}\left\|\Upsilon(0)-\Upsilon^{*}\right\|^{2}+\frac{1}{2N}\Vert\mathcal{F}\Vert^{2}.
	\end{equation*}
Finally, through transposition and scaling, it gives that
\begin{equation*}
	\mathcal{F}^{\top}\leqslant 2N\sqrt{K_{f}T}+\sqrt{N}\Vert\Upsilon(0)-\Upsilon^{*}\Vert.
\end{equation*}
Thus, $\mathcal{F}^{\top}=\mathcal{O}(\sqrt{T})$ can be reached.
\end{proof}
\begin{corollary}\label{juhe_v}
Suppose that Assumptions \ref{jiashe11} and \ref{jiashe22} hold. For any $T> T_0$, when $x(0)\in\Omega$, $\omega>(N-1)\iota$, $\mu(0)=\mathbf{0}_{Nq}$, $m+\frac{m_{0}}{2}\geqslant l$ and $K_\mu\geqslant NK_g$, based on the VGNE seeking algorithm \eqref{algo_juhe} for online aggregative game, the following regret and fit bounds hold
\begin{equation*}
	\left\{\begin{aligned}
		\mathcal{R}^{\top}&\leqslant \frac{1}{2}\Vert x(0)-x^{*}\Vert^{2},\\
		\mathcal{F}^{\top}&\leqslant 2N\sqrt{K_{f}T}+\sqrt{N}\Vert x(0)-x^{*}\Vert,
	\end{aligned}
	\right.
\end{equation*}
where $x^{*}$ is the VGNE of the online aggregative game. In other words, we have $\mathcal{R}^{\top}=\mathcal{O}(1)$ and $\mathcal{F}^{\top}=\mathcal{O}(\sqrt{T})$.
\end{corollary}
\begin{proof}
	According to the analysis in Remark \ref{juhe_algo}, when $\omega>(N-1)\iota$. there exists a finite time bound $T_{0}$ such that $\lim_{t\rightarrow T_0}\Vert \phi_{i}(t)-\sigma(x(t))\Vert =0$ holds for any $i\in\mathcal{V}$. Thus, when $t\geqslant T_{0}$, the average tracking technique employed in \eqref{algo_juhe} realizes the accurate estimation on the aggregative function $\sigma(x)$ and it holds that $\Phi_{i}(t,x_{i},\phi_{i})=\partial_{x_{i}}f_{i}(t,x_{i},\sigma(x))$. That is to say, when $t\geqslant T_{0}$, the VGNE seeking algorithm \eqref{algo_juhe} can be transformed as 
		\begin{equation}\label{algo_juhe2}
		\left\{
		\begin{aligned}
			\dot{x}_{i}  = &\Pi_{\Omega_{i}}[x_{i},-\partial_{x_{i}}f_{i}(t,x_{i},\sigma(x))-\mu_{i}^{\top}\partial_{x_i} g_{i}(t,x_{i})],\\
			\dot{\mu}_{i}=& \Pi_{\mathbb{R}_{\geqslant0}^{q}}[\mu_{i},g_{i}(t,x_{i})-K_{\mu}\sum_{j\in\mathcal{N}_{i}}\text{sgn}(\mu_{i}-\mu_{j})].	\end{aligned}
		\right.
	\end{equation}
	Similar to the proof of Theorem \ref{dingli11}, the candidate Lyapunov function is selected as
	\begin{equation*}
		\bar{V}(t)=\frac{1}{2}\Vert x-x^{*}\Vert ^{2} +\frac{1}{2}\Vert \mu-\mu^{*}\Vert^{2}.
	\end{equation*}
	Likewise, by taking the time derivative of $\bar{V}(t)$ along the direction of $\eqref{algo_juhe2}$ and based on Lemma \ref{tou22}, it gives that
	\begin{equation}\label{juhe_vv}
		\begin{aligned}
			\dot{\bar{V}}\leqslant&\sum_{i=1}^{N}(x_{i}-x_{i}^{*})^{\top}(-\partial_{x_{i}}f_{i}(t,x_{i},\sigma(x))-\mu_{i}^{\top}\partial_{x_i} g_{i}(t,x_{i}))\\
			&+\sum_{i=1}^{N}(\mu_{i}-\mu_{i}^{*})^{\top}\left[g_{i}(t,x_{i})-K_{\mu}\sum_{j=1}^{N}a_{ij}\operatorname{sgn}(\mu_{i}-\mu_{j})\right].
		\end{aligned}
	\end{equation}
Based on Assumptions \ref{jiashe11} and similar to \eqref{dv1}-\eqref{strong_convex}, we can get
	\begin{equation}\label{sm}
		\begin{aligned}
&-\sum_{i=1}^{N}(x_{i}-x_{i}^{*})^{\top}\partial_{x_{i}}f_{i}(t,x_{i},\sigma(x))\\
			=&-\sum_{i=1}^{N}(x_{i}-x_{i}^{*})^{\top}\left(\partial _{x_{i}}J_{i}(t,x_{i},x_{-i})-\partial _{x_{i}}J_{i}(t,x_{i}^{*},x_{-i}^{*})\right)\\
			&-\sum_{i=1}^{N}(x_{i}-x_{i}^{*})^{\top}\left(\partial _{x_{i}}J_{i}(t,x_{i}^{*},x_{-i}^{*})-\partial _{x_{i}}J_{i}(t,x_{i},x_{-i}^{*})\right)\\
			&-\sum_{i=1}^{N}(x_{i}-x_{i}^{*})^{\top}\partial _{x_{i}}J_{i}(t,x_{i},x_{-i}^{*})\\
			\leqslant&-m\Vert x-x^{*}\Vert^{2}+l\Vert x-x^{*}\Vert^{2}\\
			&+\sum_{i=1}^{N}[J_{i}(t,x_{i}^{*},x_{-i}^{*})-J_{i}(t,x_{i},x_{-i}^{*})]-\frac{m_{0}}{2}\Vert x-x^{*}\Vert^{2}.
		\end{aligned}
	\end{equation}
	Then, the remaining terms in \eqref{juhe_vv} are scaled in the same manner as those in \eqref{dv2}.

	\textbf{Step 1}: Likewise, by letting $\bar{\mu}^{*}=\mathbf{0}_{q}$, $K_{\mu}\geqslant NK_{g}$ and $m+\frac{m_{0}}{2}\geqslant l$, along with \eqref{sm}, \eqref{juhe_vv} can be further scaled as
	\begin{equation}\label{R2}
		\bar{\mathcal{R}}^{\top}\leqslant-\dot{\bar{V}}-(m+\frac{m_{0}}{2}-l)\Vert x-x^{*}\Vert^{2}\leqslant-\dot{\bar{V}}.
	\end{equation}
	Then, by integrating time $t$ from $0$ to $T$ on both sides of \eqref{R2}, it yields that
	\begin{equation*}
		\mathcal{R}^{\top}\leqslant-\int_{0}^{T}\dot{\bar{V}}dt\leqslant\frac{1}{2}\Vert x(0)-x^{*}\Vert^{2}.
	\end{equation*}
	That is to say, $\mathcal{R}^{\top}=\mathcal{O}(1)$ holds.

\textbf{Step 2}: Also, through a similar process in Theorem \ref{dingli11}, it can be derived that
	\begin{equation*}
		\mathcal{F}^{\top}\leqslant 2N\sqrt{K_{f}T}+\sqrt{N}\Vert x(0)-x^{*}\Vert.
	\end{equation*}
	Thus, $\mathcal{F}^{\top}=\mathcal{O}(\sqrt{T})$ can be reached.
\end{proof}
\begin{rem}\label{ml}
It is noteworthy that the condition $m+\frac{m_{0}}{2}>l$ in Theorem \ref{dingli11} is applicable to a wide range of real-life online games such as the Nash-Cournot game \cite{li2023survey} and UAV swarm game \cite{Wang2023}. Taking the UAV swarm game as an example, the $i$th UAV's cost function is given as $J_{i}(t,x_{i},x_{-i})=a_{i}(t)\Vert x_{i}-r_{i}(t)\Vert^{2}+b_{i}(t)\sum_{j\in\mathcal{N}_{i}}\Vert x_{i}-x_{j}\Vert^{2},$ where $x_i\in\mathbb{R}^2$ is the $i$th UAV's two dimensional coordinates, $a_{i}(t)$ and $b_{i}(t)$ are time-varying weight coefficients to balance the importance of task exploration and maintaining communication, $r_{i}(t)$ is a time-varying target. Then, through direct calculation, it can be obtained that
\begin{equation*}\label{xishu}
	\left\{\begin{aligned}
	m&=\min_{i}\{2a_{i}(t)\},\\
	m_{0}&=\min_{i}\{2(a_{i}(t)+b_{i}(t)\vert \mathcal{N}_{i}\vert)\},\\
	l&=\max_{i}\{2(a_{i}(t)+b_{i}(t)\vert \mathcal{N}_{i}\vert)\}.
	\end{aligned}
	\right.
\end{equation*}
Thus, the condition $m+\frac{m_{0}}{2}>l$ can be expressed as 
\begin{equation}\label{hengchengli}
\begin{aligned}
	&\min_{i}\{2a_{i}(t)\}+\min_{i}\{a_{i}(t)+b_{i}(t)\vert \mathcal{N}_{i}\vert\}\\
	>&\max_{i}\{2(a_{i}(t)+b_{i}(t)\vert \mathcal{N}_{i}\vert)\}.
\end{aligned}
\end{equation}
Normally speaking, the importance of completing the given task is often far greater than the importance of maintaining communication, that is to say, $a_{i}(t)\gg b_{i}(t)\vert \mathcal{N}_{i}\vert$ holds for any $i\in\mathcal{V}$ and $t>0$. Based on the above analysis, \eqref{hengchengli} can be scaled as $ \min_{i}\{3a_{i}(t)\}>\max_{i}\{2(a_{i}(t)+b_{i}(t)\vert \mathcal{N}_{i}\vert)\},$ which is easy to be satisfied. The similar analysis can also be applied to the Nash-Cournot game. Thus, the condition $m+\frac{m_{0}}{2}>l$ is not restrictive in real-life scenarios. Apart from that, the requirement on $m,~m_{0}$ and $l$ is relaxed to $m+\frac{m_{0}}{2}\geqslant l$ for the online aggregative game according to Corollary \ref{juhe_v}.
\end{rem}
\subsection{Event-Triggered VGNE Seeking Algorithm}
To reduce unnecessary communication among players, in this subsection, a distributed VGNE seeking algorithm with dynamic event-triggered mechanism is proposed.
\par
Specifically, for player $i$, $t_{k}^{i}$ stands for its $k$th communication instant while the set $\left\{t^{i}_{1},t^{i}_{2},\cdots,t_{k}^{i},\cdots\right\}$ denotes for its communication instant sequence. Define $\hat{\Upsilon}^{i}(t)=\Upsilon^{i}(t_{k}^{i}), ~\hat{\mu}_{i}(t)=\mu_{i}(t_{k}^{i}),~\text{for}~ t \in[t_{k}^{i},t_{k+1}^{i})$ as the available information of its neighbors at $t$ and $e_{\Upsilon}^{i}(t)=\hat{\Upsilon}^{i}(t)-\Upsilon^{i}(t),~ e_{\mu}^{i}(t)=\hat{\mu}_{i}(t)-\mu_{i}(t)$ as the measurement errors.
\par
Then, a distributed VGNE seeking algorithm with dynamic event-triggered mechanism is put forward as follows
\begin{equation}\label{algo2}
	\left\{
	\begin{aligned}
		\dot{x}_{i}=&\Pi_{\Omega_{i}}[x_{i},-\partial_{x_{i}}J_{i}(t,\Upsilon^{i})-\mu_{i}^{\top}\partial_{x_i} g_{i}(t,x_{i})\\
&-kR_{i}\sum_{j\in\mathcal{N}_{i}}(\hat{\Upsilon}^{i}-\hat{\Upsilon}^{j}),\\
		\dot{\Upsilon}_{-i}^{i}=&-kS_{i}\sum_{j\in\mathcal{N}_{i}}(\hat{\Upsilon}^{i}-\hat{\Upsilon}^{j}),\\
		\dot{\mu}_{i}=&\Pi_{\mathbb{R}_{\geqslant 0}^{q}}[\mu_{i},g_{i}(t,x_{i})-2K_{\mu}\sum_{j\in\mathcal{N}_{i}}\text{sgn}(\hat{\mu}_{i}-\hat{\mu}_{j})],
	\end{aligned}
	\right.
\end{equation}
where all the other symbols have the same meaning as are mentioned in \eqref{algo1}. The dynamic event-triggered mechanism is designed as follows
\begin{equation}\label{triggerc}
	\begin{aligned}
	t_{k+1}^{i}=&\inf_{t>t_{k}^{i}}\{t\mid4d_{i}\Vert e_{\Upsilon}^{i}\Vert ^{2}>\sum_{j=1}^{N}a_{ij}\Vert\Upsilon^{i}(t_{k}^{i})-\Upsilon^{j}(t_{k}^{j})\Vert^{2}+\beta_{i}\\
	&\text{ or }6\sqrt{q}N\Vert e_{\mu}^{i}\Vert>\sum_{j=1}^{N}a_{ij}\Vert\mu_{i}(t_{k}^{i})-\mu_{j}(t_{k}^{j})\Vert_1+\gamma_{i}\},
	\end{aligned}
\end{equation}
where the interval variables $\beta_{i}, \gamma_{i}$ with $\beta_{i}(0)>0, \gamma_{i}(0)>0$ are updated respectively as follows
\begin{equation}\label{update}
	\left\{
	\begin{aligned}
	\dot{\beta}_{i}&=\eta(-\beta_{i}+\sum_{j=1}^{N}a_{ij}\Vert\Upsilon^{i}(t_{k}^{i})-\Upsilon^{j}(t_{k}^{j})\Vert^{2}-4d_{i}\Vert e_{\Upsilon}^{i}\Vert^{2}),\\
	\dot{\gamma}_{i}&=\eta(-\gamma_{i}+\sum_{i=1}^{N}a_{ij}\Vert\mu_{i}(t_{k}^{i})-\mu_{j}(t_{k}^{j})\Vert_{1}-6\sqrt{q}N\Vert e_{\mu}^{i}\Vert),
	\end{aligned}\right.
\end{equation}
in which $\eta$ is a small positive coefficient.
\begin{rem}
As indicated in the review of event-triggered control schemes \cite{ge2021dynamic}, the threshold functions in the event-triggered mechanism \eqref{triggerc} include not only the system information terms $\sum_{j=1}^{N}a_{ij}\Vert\Upsilon^{i}(t_{k}^{i})-\Upsilon^{j}(t_{k}^{j})\Vert^{2}$ and $\sum_{j=1}^{N}a_{ij}\Vert\mu_{i}(t_{k}^{i})-\mu_{j}(t_{k}^{j})\Vert_1$ but also auxiliary variables $\beta_{i}$ and $\gamma_{i}$ governed by the dynamics \eqref{update}. This feature classifies it as a dynamic event-triggered mechanism. In contrast, the static event-triggered mechanism in \cite{yu2024distributed} is defined as \begin{equation*}
					t_{i}^{l+1}:=\inf_{t>t_{i}^{l}}\{t\mid \Vert e_{i}(t)\Vert > \frac{\sum_{j\in\mathcal{N}_{i}}\Vert \hat{\mu}_{i}-\hat{\mu}_{j}\Vert_{1}}{6N\sqrt{q}}+\frac{\sigma e^{-\iota t}}{3N^{2}K_{\mu}\sqrt{q}}\}, 
				\end{equation*} where a time-dependent term $\frac{\sigma e^{-\iota t}}{3N^{2}K_{\mu}\sqrt{q}}$ is deployed. The introduction of the auxiliary variables $\beta_{i}$ and $\gamma_{i}$ offers two main advantages. First, the triggering interval in \eqref{triggerc} can be flexibly tuned through the evolution of these auxiliary variables, allowing it to adapt to varying system conditions. In contrast, the triggering interval in \cite{yu2024distributed} is solely determined by the exponentially decaying term, resulting in a fixed and rapidly diminishing triggering period.
Second, since both the system information and auxiliary variables in \eqref{triggerc} are functions of the consensus errors, the proposed mechanism naturally adapts to the evolution of the system state. Conversely, the time-dependent term in \cite{yu2024distributed} depends only on time $t$, which limits its adaptability. Moreover, the event-triggered mechanism \eqref{triggerc} adopts two independent triggering conditions for the two interactive variables, which provides greater flexibility compared with traditional dynamic mechanisms employing a single triggering condition. In most previous studies \cite{xu2021event,Yang2024,li2023distributed,liu2023predefined}, multiple transmission variables share one common triggering inequality, such as the one in \cite{li2023distributed}: $$\begin{aligned}
t_{l+1}^i=&\inf \{t>t_l^i \mid 2(\overline{\mathbf{e}}^i)^\top \overline{\mathbf{e}}^i+e_1^{i\top} e_1^i+e_2^{i\top} e_2^i+e_3^{i\top} e_3^i\\
&-\sigma^i(\|\sum_{\hat{j}=1}^N a_{i \hat{j}}(\hat{y}^i-\hat{y}^{\hat{j}})\|^2+\|\sum_{\hat{j}=1}^N a_{i \hat{j}}(\hat{u}_i-\hat{u}_{\hat{j}})\|^2 \\
&+\|\sum_{k=1}^{m_i} \sum_{l \in(\mathcal{N}_{i k}^i \cup\{k\})} a_{j l}^i(\hat{g}_{j k}^i-\hat{g}_{l k}^i)\|^2)>\frac{\eta^i}{\chi^i}\},
\end{aligned}$$where multiple variables ($\overline{\mathbf{e}}^i=\hat{\mathbf{g}}^i-\mathbf{g}^i$, $e_1^i=\hat{y}^i-y^i$, $e_2^i=\hat{u}_i-u_i$, $e_3^i=\hat{v}_i-v_i$) share a single triggering boundary. In such a setup, all associated errors accumulate together, often resulting in more frequent triggering. In contrast, the event-triggered mechanism \eqref{triggerc} assigns each variable its own triggering condition, thus enhancing flexibility and effectively reducing communication frequency.
\end{rem}
\begin{thm}\label{dingli222}
Suppose that Assumptions \ref{jiashe11} and \ref{jiashe22} hold. For any $T\geqslant0$, when $x(0)\in\Omega$, $\mu(0)=\mathbf{0}_{Nq}$, $m+\frac{m_{0}}{2}>l$, $k>\frac{2(l-l_{0})^{2}+(8m+4m_{0})l_{0}}{(4m+2m_{0}-4l)\lambda_{2}(\mathbf{L})}$ and $K_\mu\geqslant NK_g$, based on the VGNE seeking algorithm \eqref{algo2} under dynamic event-triggered mechanism \eqref{triggerc}, the following regret and fit bounds hold
\begin{equation*}
	\left\{\begin{aligned}
		\mathcal{R}^{\top}\leqslant& \frac{1}{2}\left\|\Upsilon(0)-\Upsilon^{*}\right\|^{2}+\frac{K_{\mu}}{4\eta}\sum_{i=1}^{N}\gamma_{i}(0)+\frac{k}{8\eta}\sum_{i=1}^{N}\beta_{i}(0),\\
		\mathcal{F}^{\top}\leqslant&\sqrt{N}\Vert\Upsilon(0)-\Upsilon^{*}\Vert+2N\sqrt{K_{f}T}\\
 	&+\sqrt{\frac{K_{\mu}N}{2\eta}\sum_{i=1}^{N}\gamma_{i}(0)+\frac{kN}{4\eta}\sum_{i=1}^{N}\beta_{i}(0)}.
	\end{aligned}
	\right.
\end{equation*}
In other words, we have $\mathcal{R}^{\top}=\mathcal{O}(1)$ and $\mathcal{F}^{\top}=\mathcal{O}(\sqrt{T})$.
\end{thm}
\begin{proof}
Select the candidate Lyapunov function as follows $$\widetilde{V}=\frac{1}{2}\left\|\Upsilon-\Upsilon^{*}\right\|^{2}+\frac{1}{2}\left\|\mu-\mu^{*}\right\|^{2}.$$
Then, taking the time derivative of $\widetilde{V}$ along the direction of \eqref{algo2} and based on Lemma \ref{tou22}, it yields that
\begin{small}\begin{equation*}\label{2V1}
	\begin{aligned} \dot{\widetilde{V}}\leqslant&\sum_{i=1}^{N}(x_{i}-x_{i}^{*})^{\top}[-\partial_{x_{i}}J_{i}(t,\Upsilon^{i})-kR_{i}\sum_{j\in\mathcal{N}_{i}}(e_{\Upsilon}^{i}-e_{\Upsilon}^{j}\\
		&+\Upsilon^{i}-\Upsilon^{j})-\mu_{i}^{\top}\partial_{x_i} g_{i}(t,x_{i})]+\sum_{i=1}^{N}(\Upsilon_{-i}^{i}-\Upsilon_{-i}^{i^{*}})^{\top}\\
		&(-kS_{i}\sum_{j\in\mathcal{N}_{i}}(e_{\Upsilon}^{i}-e_{\Upsilon}^{j}+\Upsilon^{i}-\Upsilon^{j}))+\sum_{i=1}^{N}(\mu_{i}-\mu_i^*)^{\top}g_{i}(t,x_{i})\\
		&-2K_{\mu}\sum_{i=1}^{N}(\mu_{i}-\mu_i^*)^{\top}\sum_{j\in\mathcal{N}_{i}}\text{sgn}(e_{\mu}^{i}-e_{\mu}^{j}+\mu_{i}-\mu_{j}).
	\end{aligned}
\end{equation*}\end{small}

\textbf{Step 1}: By letting $\bar{\mu}^{*}=\mathbf{0}_{q}$ and $K_\mu\geqslant NK_g$, similar to the proof in Theorem \ref{dingli11}, it can be obtained that
\begin{equation*}\label{18}
	\begin{aligned}
		\dot{\widetilde{V}}\leqslant&-\bar{\mathcal{R}}^{\top}-\frac{k}{2}(\Upsilon-\Upsilon^{*})^{\top}\mathbf{L}(\Upsilon-\Upsilon^{*})\\
		&-k(\Upsilon-\Upsilon^{*})^{\top}\mathbf{L}e_{\Upsilon}+\frac{K_{\mu}}{2}\sum_{i=1}^{N}\sum_{j=1}^{N}a_{ij}\Vert\mu_{i}-\mu_{j}\Vert_{1}\\
			&-K_{\mu}\sum_{i=1}^{N}\sum_{j=1}^{N}a_{ij}(\mu_{i}-\mu_{j})^{\top}\text{sgn}(\hat{\mu}_{i}-\hat{\mu}_{j}).
	\end{aligned}
	\end{equation*}
To facilitate the subsequent analysis, deploy $\widetilde{V}_{1}$ and $\widetilde{V}_{2}$ to represent the terms in the above formula with
\begin{equation*}
	\widetilde{V}_{1}=-\frac{k}{2}(\Upsilon-\Upsilon^{*})^{\top}\mathbf{L}(\Upsilon-\Upsilon^{*})-k(\Upsilon-\Upsilon^{*})^{\top}\mathbf{L}e_{\Upsilon},
\end{equation*}
and
\begin{equation*}
	\begin{aligned}
	\widetilde{V}_{2}=&\frac{K_{\mu}}{2}\sum_{i=1}^{N}\sum_{j=1}^{N}a_{ij}\Vert\mu_{i}-\mu_{j}\Vert_{1}\\
			&-K_{\mu}\sum_{i=1}^{N}\sum_{j=1}^{N}a_{ij}(\mu_{i}-\mu_{j})^{\top}\text{sgn}(\hat{\mu}_{i}-\hat{\mu}_{j}).
	\end{aligned}
\end{equation*}
	\par
According to the symmetry of the Laplacian matrix and the positive semidefinite property of $\mathcal{L}=\mathcal{D}-\mathcal{A}$ and $\mathcal{D}+\mathcal{A}$, it can be derived that
	\begin{equation*}\label{det}
		\begin{aligned}
			\widetilde{V}_{1}&=-\frac{k}{2}\left( (\Upsilon-\Upsilon^{*})^{\top}\mathbf{L}(\Upsilon-\Upsilon^{*})+2(\Upsilon-\Upsilon^{*})^{\top}\mathbf{L}e_{\Upsilon}\right)\\
			&=-\frac{k}{2}(\hat{\Upsilon}^{\top}\mathbf{L}\hat{\Upsilon}-e_{\Upsilon}^{\top}\mathbf{L}e_{\Upsilon})\\
			&\leqslant-\frac{k}{4}\left(\sum_{i=1}^{N}(\sum_{j=1}^{N}a_{ij}\Vert \hat{\Upsilon}^{i}-\hat{\Upsilon}^{j}\Vert^{2}-4d_{i}\Vert e_{\Upsilon}^{i}\Vert^{2})\right).
		\end{aligned}
	\end{equation*}
	Then, according to the dynamic triggering condition \eqref{triggerc}, it can be further obtained that
	\begin{equation*}\label{circle2}
		\widetilde{V}_{1}\leqslant \frac{k}{8}\left(\sum_{i=1}^{N}\beta_{i}-\sum_{i=1}^{N}(\sum_{j=1}^{N}a_{ij}\Vert\hat{\Upsilon}^{i}-\hat{\Upsilon}^{j}\Vert^{2}-4d_{i}\Vert e_{\Upsilon}^{i}\Vert^{2})\right).
	\end{equation*}
	\par
	Also, by noticing the inequality $\Vert\upsilon\Vert_{1}\leqslant\sqrt{q}\Vert\upsilon\Vert, \forall\upsilon\in\mathbb{R}^{q}$, it derives that
	\begin{equation}\label{circle3}
		\begin{aligned} \widetilde{V}_{2}\leqslant&\frac{K_{\mu}}{2}\sum_{i=1}^{N}\sum_{j=1}^{N}a_{ij}\Vert\hat{\mu}_{i}-\hat{\mu}_{j}\Vert_{1}+\frac{K_{\mu}\sqrt{q}}{2}\sum_{i=1}^{N}\sum_{j=1}^{N}\Vert e_{\mu}^{i}-e_{\mu}^{j}\Vert\\
			&-K_{\mu}\sum_{i=1}^{N}\sum_{j=1}^{N}a_{ij}\Vert\hat{\mu}_{i}-\hat{\mu}_{j}\Vert_{1}+K_{\mu}\sqrt{q}\sum_{i=1}^{N}\sum_{j=1}^{N}\Vert e_{\mu}^{i}-e_{\mu}^{j}\Vert.
		\end{aligned}
	\end{equation}
	In addition, the following inequality holds
	\begin{equation}\label{rs}
		\sum_{i=1}^{N}\sum_{j=1}^{N}\Vert e_{\mu}^{i}-e_{\mu}^{j}\Vert\leqslant 2N\sum_{i=1}^{N}\Vert e_{\mu}^{i}\Vert.
	\end{equation}
Taking \eqref{rs} into \eqref{circle3}, it can be obtained that
\begin{equation*}\label{circle33}
	\begin{aligned}
		\widetilde{V}_{2}\leqslant&3K_{\mu}\sqrt{q}N\sum_{i=1}^{N}\Vert e_{\mu}^{i}\Vert-\frac{K_{\mu}}{2}\sum_{i=1}^{N}\sum_{j=1}^{N}a_{ij}\Vert\hat{\mu}_{i}-\hat{\mu}_{j}\Vert_{1}\\
		=&\frac{K_{\mu}}{2}\sum_{i=1}^{N}\left(6\sqrt{q}N\Vert e_{\mu}^{i}\Vert-\sum_{j=1}^{N}a_{ij}\Vert\hat{\mu}_{i}-\hat{\mu}_{j}\Vert_{1}\right)\\
		\leqslant&\frac{K_{\mu}}{4}\sum_{i=1}^{N}\left(6\sqrt{q}N\Vert e_{\mu}^{i}\Vert-\sum_{j=1}^{N}a_{ij}\Vert\hat{\mu}_{i}-\hat{\mu}_{j}\Vert_{1}\right)\\
		&+\frac{K_{\mu}}{4}\sum_{i=1}^{N}\gamma_{i},
	\end{aligned}
\end{equation*}
 where the last inequality holds due to the dynamic triggering condition \eqref{triggerc}.
 \par
Letting $$\widetilde{\mathbb{V}}=\widetilde{V}+\frac{K_{\mu}}{4\eta}\sum_{i=1}^{N}\gamma_{i}(t)+\frac{k}{8\eta}\sum_{i=1}^{N}\beta_{i}(t),$$it yields that
 \begin{equation}\label{regretb}
 	\dot{\widetilde{\mathbb{V}}}\leqslant-\bar{\mathcal{R}}^{\top}.
 \end{equation}
 By integrating time $t$ from $0$ to $T$ on both sides of \eqref{regretb}, it yields that
 \begin{equation*}
 \mathcal{R}^{\top}\leqslant\frac{1}{2}\left\|\Upsilon(0)-\Upsilon^{*}\right\|^{2}+\frac{K_{\mu}}{4\eta}\sum_{i=1}^{N}\gamma_{i}(0)+\frac{k}{8\eta}\sum_{i=1}^{N}\beta_{i}(0).
\end{equation*}
 Thus it can be concluded that $\mathcal{R}^{\top}=\mathcal{O}(1)$.
 \par
\textbf{Step 2}: By letting \begin{equation*}
		\bar{\mu}^*_i=
		\begin{cases}
			0,&\text{if }\mathcal{F}_{i}^{\top}\leqslant0,\\
			\frac{\mathcal{F}_{i}^{\top}}{N},&\text{if }\mathcal{F}_{i}^{\top}>0,
		\end{cases}
	\end{equation*}similar to the proof in Theorem \ref{dingli11}, it can be obtained that
\begin{equation*}
	\dot{\widetilde{\mathbb{V}}}\leqslant-\bar{\mathcal{R}}^{\top}-\bar{\mu}^{*\top}\sum_{i=1}^{N}g_{i}(t,x_{i}).
	\end{equation*}
 Then, similarly, it gives that
 \begin{equation*}
 	\begin{aligned}
 	\frac{1}{N}\Vert \mathcal{F}\Vert^{2}\leqslant&\frac{1}{2}\left\|\Upsilon(0)-\Upsilon^{*}\right\|^{2}+\frac{K_{\mu}}{4\eta}\sum_{i=1}^{N}\gamma_{i}(0)+\frac{k}{8\eta}\sum_{i=1}^{N}\beta_{i}(0)\\
 	&+\frac{1}{2N}\Vert\mathcal{F}\Vert^{2}+2NK_{f}T.
 	\end{aligned}
 \end{equation*}
 Finally, by transposition and scaling, it can be derived that
 \begin{equation*}
 	\begin{aligned}
\mathcal{F}^{\top}\leqslant&\sqrt{N}\Vert\Upsilon(0)-\Upsilon^{*}\Vert+\sqrt{\frac{K_{\mu}N}{2\eta}\sum_{i=1}^{N}\gamma_{i}(0)+\frac{kN}{4\eta}\sum_{i=1}^{N}\beta_{i}(0)}\\
 	&+2N\sqrt{K_{f}T}.
 	\end{aligned}
 \end{equation*}
 Thus, $\mathcal{F}^{\top}=\mathcal{O}(\sqrt{T})$ can be reached.
 \end{proof}
 \par
 During the triggering process, an extreme scenario might exist in which there are infinite number of triggers within a finite amount of time. In real-life applications, this phenomenon, normally known as Zeno behavior, ought to be strictly forbidden. The following theorem is given to demonstrate that the triggering condition \eqref{triggerc} will not exhibit Zeno behavior.
 \begin{thm}
No player's action profile under the dynamic event-triggered mechanism \eqref{triggerc} will display Zeno behavior.
 \end{thm}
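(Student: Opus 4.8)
The plan is to rule out Zeno behavior by establishing a strictly positive uniform lower bound on the inter-event intervals $t_{k+1}^{i}-t_{k}^{i}$ over the finite horizon $[0,T]$ for every player $i$. The starting observation is that at each triggering instant the held values are refreshed, so $e_{\Upsilon}^{i}(t_{k}^{i})=\mathbf{0}$ and $e_{\mu}^{i}(t_{k}^{i})=\mathbf{0}$, while between consecutive triggers $\hat{\Upsilon}^{i}$ and $\hat{\mu}_{i}$ stay constant, whence $\dot{e}_{\Upsilon}^{i}=-\dot{\Upsilon}^{i}$ and $\dot{e}_{\mu}^{i}=-\dot{\mu}_{i}$. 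Thus each error grows continuously from zero, and the task reduces to (i) bounding how fast it can grow and (ii) showing the triggering threshold stays bounded away from zero.

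For step (i), I would first note that on the finite interval $[0,T]$ the gain satisfies $k(t)=e^{t}\leqslant e^{T}$, the actions live in the compact set $\Omega$, and the multipliers remain bounded (as already guaranteed by the Lyapunov estimates underlying Theorem \ref{dingli11}); together with Assumption \ref{jiashe11} this bounds $\partial_{x_{i}}J_{i}$, $\partial g_{i}$, and the held increments $\hat{\Upsilon}^{i}-\hat{\Upsilon}^{j}$, $\hat{\mu}_{i}-\hat{\mu}_{j}$. Since the projection onto a tangent cone is non-expansive, i.e. $\|\Pi_{S}[x,v]\|\leqslant\|v\|$, one obtains uniform constants $M_{\Upsilon},M_{\mu}>0$ with $\|\dot{\Upsilon}^{i}\|\leqslant M_{\Upsilon}$ and $\|\dot{\mu}_{i}\|\leqslant M_{\mu}$ on $[0,T]$, where the bound on $\|\dot{\mu}_{i}\|$ is in fact independent of $k$ and of the states. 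Integrating from the last trigger then gives the linear growth estimates $\|e_{\Upsilon}^{i}(t)\|\leqslant M_{\Upsilon}(t-t_{k}^{i})$ and $\|e_{\mu}^{i}(t)\|\leqslant M_{\mu}(t-t_{k}^{i})$.

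For step (ii), I would exploit that, by definition of $t_{k+1}^{i}$, neither triggering inequality in \eqref{triggerc} holds on $[t_{k}^{i},t_{k+1}^{i})$. Feeding the resulting inequalities $\sum_{j}a_{ij}\|\Upsilon^{i}(t_{k}^{i})-\Upsilon^{j}(t_{k}^{j})\|^{2}-4d_{i}\|e_{\Upsilon}^{i}\|^{2}\geqslant-\beta_{i}$ and $\sum_{j}a_{ij}\|\mu_{i}(t_{k}^{i})-\mu_{j}(t_{k}^{j})\|_{1}-6\sqrt{q}N\|e_{\mu}^{i}\|\geqslant-\gamma_{i}$ into the update law \eqref{update} yields the differential inequalities $\dot{\beta}_{i}\geqslant-3\beta_{i}$ and $\dot{\gamma}_{i}\geqslant-2\gamma_{i}$; since $\beta_{i},\gamma_{i}$ are continuous across triggers, a comparison argument gives $\beta_{i}(t)\geqslant\beta_{i}(0)e^{-3t}\geqslant\beta_{i}(0)e^{-3T}>0$ and $\gamma_{i}(t)\geqslant\gamma_{i}(0)e^{-2t}\geqslant\gamma_{i}(0)e^{-2T}>0$ on $[0,T]$.

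Finally, because the consensus sums in \eqref{triggerc} are nonnegative, a trigger of the first (resp. second) type can occur only once $4d_{i}\|e_{\Upsilon}^{i}\|^{2}>\beta_{i}$ (resp. $6\sqrt{q}N\|e_{\mu}^{i}\|>\gamma_{i}$), so at $t_{k+1}^{i}$ the active error must exceed a strictly positive threshold. Combining this with the linear growth estimates forces $t_{k+1}^{i}-t_{k}^{i}\geqslant\min\{\sqrt{\beta_{i}(0)e^{-3T}/(4d_{i})}/M_{\Upsilon},\ \gamma_{i}(0)e^{-2T}/(6\sqrt{q}N M_{\mu})\}>0$, which excludes Zeno behavior. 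I expect the main obstacle to be the unboundedness of $k(t)=e^{t}$ over an infinite horizon: the whole argument works only because it is confined to a finite $[0,T]$, where $k$, the states, and the multipliers are uniformly bounded, so making these bounds explicit—especially the boundedness of $\mu$ that feeds $M_{\Upsilon}$—is the delicate part, whereas the positivity of the dynamic variables $\beta_{i},\gamma_{i}$ is the key structural ingredient that the static mechanisms lack.
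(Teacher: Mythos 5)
Your proposal is correct and follows essentially the same route as the paper's proof: bounded derivatives give linear growth of the measurement errors from zero after each trigger, while the update law \eqref{update} together with the non-triggering condition yields $\dot{\beta}_{i}\geqslant-3\beta_{i}$ (resp. $\dot{\gamma}_{i}\geqslant-2\gamma_{i}$), so the thresholds stay bounded below by $\beta_{i}(0)e^{-3t}$ (resp. $\gamma_{i}(0)e^{-2t}$) and the inter-event intervals admit a strictly positive lower bound. The differences are only presentational: the paper wraps the argument in a contradiction with an assumed accumulation point $t_{\infty}^{i}$ and dismisses the $e_{\mu}^{i}$ channel as ``similar,'' whereas you argue directly over $[0,T]$ and spell out the boundedness of $\dot{\Upsilon}^{i}$ and $\dot{\mu}_{i}$ (via the finite-horizon bound on $k(t)$, the compactness of $\Omega$, and the non-expansiveness of the tangent-cone projection) more carefully than the paper's terse bound $\Vert\dot{\Upsilon}^{i}\Vert\leqslant M$.
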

\begin{proof}
The theorem is proved by contradiction. Suppose that the dynamic event-triggered mechanism \eqref{triggerc} exhibits Zeno behavior, i.e., there exists a constant $t_{\infty}^{i}>0$ such that $\lim_{k\rightarrow\infty}t_{k}^{i}=t_{\infty}^{i}$. For $t\in[t_{k}^{i},t_{k+1}^{i})$, by separating the two inequalities in \eqref{triggerc}, we obtain \begin{equation*}
	t_{k+1}^{\Upsilon_i}=\inf_{t>t_{k}^{i}}\{t\mid4d_{i}\Vert e_{\Upsilon}^{i}\Vert ^{2}>\sum_{j=1}^{N}a_{ij}\Vert\Upsilon^{i}(t_{k}^{i})-\Upsilon^{j}(t_{k}^{j})\Vert^{2}+\beta_{i}\},
\end{equation*} 
\begin{equation*}
t_{k+1}^{\mu_i}=\inf_{t>t_{k}^{i}}\{t\mid6\sqrt{q}N\Vert e_{\mu}^{i}\Vert>\sum_{j=1}^{N}a_{ij}\Vert\mu_{i}(t_{k}^{i})-\mu_{j}(t_{k}^{j})\Vert_1+\gamma_{i}\}.
\end{equation*} Hence, the overall triggering instant is given by $t_{k+1}^i=\min\{t_{k+1}^{\Upsilon_i},t_{k+1}^{\mu_i}\}$.

For $t_{k+1}^{\Upsilon_i}$, the time derivative of the measurement error $e_{\Upsilon}^{i}$ satisfies
\begin{equation*}
\Vert\dot{e}_{\Upsilon}^{i}\Vert=\Vert\dot{\Upsilon}^{i}\Vert\leqslant M,
\end{equation*}
where $M>0$ denotes the upper bound of the closed set $\Omega_i$. Consequently,
\begin{equation*}
\Vert e_{\Upsilon}^{i}\Vert=\Vert\int_{t_{k}^{i}}^{t}\dot{e}_{\Upsilon}^{i}(s)\text{d}s\Vert\leqslant\int_{t_{k}^{i}}^{t}\Vert\dot{e}_{\Upsilon}^{i}(s)\Vert \text{d}s\leqslant M(t-t_{k}^{i}).
\end{equation*}
Similarly, for $t_{k+1}^{\mu_i}$, the time derivative of $e_{\mu}^{i}$ is bounded as
\begin{equation*}\label{zb}
\Vert\dot{e}_{\mu}^{i}\Vert=\Vert\dot{\mu}_{i}\Vert\leqslant \Vert g_{i}(t,x_{i})-2K_{\mu}\sum_{j\in\mathcal{N}_{i}}\text{sgn}(\hat{\mu}_{i}-\hat{\mu}_{j})\Vert\leqslant\tilde{M},
\end{equation*}
where the first inequality follows from $\left\|\Pi_{S}[x, v]\right\| \leqslant\|v\|$ (Remark 2.1 in \cite{zhang1995stability}) and the second inequality is given according to Remark \ref{youjie}. Then, one derives
\begin{equation*}
\Vert e_{\mu}^{i}\Vert=\Vert\int_{t_{k}^{i}}^{t}\dot{e}_{\mu}^{i}(s)\text{d}s\Vert\leqslant\int_{t_{k}^{i}}^{t}\Vert\dot{e}_{\mu}^{i}(s)\Vert \text{d}s\leqslant \tilde{M}(t-t_{k}^{i}).
\end{equation*}

Then, if $t_{k+1}^i=t_{k+1}^{\Upsilon_i}$, it follows that
\begin{equation*}
\begin{aligned}
	M(t_{k+1}^{\Upsilon_i}-t_{k}^{i})&\geqslant\Vert e_{\Upsilon}^{i}(t_{k+1}^{\Upsilon_i})\Vert\\
	&>\sqrt{\frac{\sum_{j=1}^{N}a_{ij}\Vert\Upsilon^{i}(t_{k}^{i})-\Upsilon^{j}(t_{k}^{j})\Vert^{2}+\beta_{i}(t_{k+1}^{\Upsilon_i})}{4d_{i}}}\\
	&\geqslant\sqrt{\frac{\beta_{i}(t_{k+1}^{\Upsilon_i})}{4d_{i}}}\geqslant\sqrt{\frac{\beta_i(0)e^{-2\eta t_{\infty}^{i}}}{4d_{i}}}>0.
\end{aligned}
\end{equation*}
If $t_{k+1}^i=t_{k+1}^{\mu_i}$, one obtains
\begin{equation*}
\begin{aligned}
	\tilde{M}(t_{k+1}^{\mu_i}-t_{k}^{i})&\geqslant\Vert e_{\mu}^{i}(t_{k+1}^{\mu_i})\Vert\\
	&>\frac{\sum_{j=1}^{N}a_{ij}\Vert\mu_{i}(t_{k}^{i})-\mu_{j}(t_{k}^{j})\Vert_1+\gamma_{i}(t_{k+1}^{\mu_i})}{6\sqrt{q}N}\\
	&\geqslant\frac{\gamma_{i}(t_{k+1}^{\mu_i})}{6\sqrt{q}N}\geqslant\frac{\gamma_i(0)e^{-2\eta t_{\infty}^{i}}}{6\sqrt{q}N}>0.
\end{aligned}
\end{equation*}
Therefore, the inter-event interval satisfies
\begin{equation*}
	t_{k+1}^i-t_k^i>\min\left\{\sqrt{\frac{\beta_i(0)e^{-2\eta t_{\infty}^{i}}}{4M^2d_{i}}},\frac{\gamma_i(0)e^{-2\eta t_{\infty}^{i}}}{6\tilde{M}\sqrt{q}N}\right\}>0.
\end{equation*}
Thus, $\lim_{k\rightarrow\infty}t_{k}^{i}=+\infty$, which contradicts the assumption $\lim_{k\rightarrow\infty}t_{k}^{i}=t_{\infty}^{i}$. Hence, Zeno behavior is strictly excluded.
\end{proof}
\subsection{Noise-Resilient VGNE Seeking Algorithm}
For VGNE seeking algorithm \eqref{algo1}, the signum functions can be interpreted as representing the relative directions of the interactive variables, since $\operatorname{sgn}\left(\mu_i-\mu_j\right)=\operatorname{sgn}\left(\frac{\mu_i-\mu_j}{\left\|\mu_i-\mu_j\right\|}\right)$. In practice, however, the measurement of the relative directions between neighboring players may be corrupted by communication noise or sensor imperfections. To investigate the effect of such inaccuracies on the direction of relative state measurements, we replace $\operatorname{sgn}\left(\mu_i-\mu_j\right)$ by $\operatorname{sgn}\left(\mu_i-\mu_j+\zeta_{i j}(t)\left\|\mu_i-\mu_j\right\|\right)$, where $\zeta_{i j}(t)\in\mathbb{R}^q$ denotes a time-varying communication noise from player $j$ to player $i$. It is noteworthy that the noise $\zeta_{i j}(t)$ arises when player $i$ measures the direction of the relative state of its neighbor $j$, and thus it is added to the normalized relative state.

Following the definitions in Subsection \ref{refit}, the regret and fit in the expected sense are given by
$$ \begin{aligned} \mathfrak{R}^\top &=\mathbb{E}\left[\int_{0}^{T}\sum_{i=1}^{N}\left(J_{i}(t,x_{i}(t),x_{-i}^{*})-J_{i}(t,x_{i}^{*},x_{-i}^{*})\right)\text{d}t\right], \\ \mathfrak{F}^\top &=\left\|\mathbb{E}\left[\Big[\int_{0}^{T}\sum_{i=1}^{N}g_{i}(t,x_{i})\text{d}t\Big]_{+}\right]\right\|.\end{aligned} $$
\begin{assum}\label{jiashenoise}
The measurement noises $\zeta_{i j}(t)$ are independent and identically distributed random variables with a symmetric common probability density function.
\end{assum}

As a result, a continuous-time noise-resilient VGNE seeking algorithm is proposed as follows
\begin{equation}\label{algonoise}
	\left\{
	\begin{aligned}
		\dot{x}_{i}  = &\Pi_{\Omega_{i}}[x_{i},-\partial_{x_{i}}J_{i}(t,\Upsilon^{i})-\mu_{i}^{\top}\partial_{x_i} g_{i}(t,x_{i})\\
&-kR_{i}\sum_{j\in\mathcal{N}_{i}}(\Upsilon^{i}-\Upsilon^{j})],\\
		\dot{\Upsilon}_{-i}^{i}  =& -kS_{i}\sum_{j\in\mathcal{N}_{i}}(\Upsilon^{i}-\Upsilon^{j}),\\
		\dot{\mu}_{i}  =& \Pi_{\mathbb{R}_{\geqslant0}^{q}}[\mu_{i},-K_{\mu}\sum_{j\in\mathcal{N}_{i}}\text{sgn}(\mu_{i}-\mu_{j}+\zeta_{i j}(t)\left\|\mu_i-\mu_j\right\|)\\
&+g_{i}(t,x_{i})].
	\end{aligned}
	\right.
	\end{equation}
\begin{thm}\label{dinglinoise}
Suppose that Assumptions \ref{jiashe11}, \ref{jiashe22} and \ref{jiashenoise} hold. For any $T\geqslant0$, when $x(0)\in\Omega$, $\mu(0)=\mathbf{0}_{Nq}$, $m+\frac{m_{0}}{2}>l$, $k>\frac{(l-l_{0})^{2}+(4m+2m_{0})l_{0}}{(4m+2m_{0}-4l)\lambda_{2}(\mathbf{L})}$, $K_\mu\geqslant N^2K_g$, and $\mathbb{E}[\|\zeta_{i j}(t)\|_1] \leqslant 1-\frac{N^2 K_g}{K_\mu}$, based on the VGNE seeking algorithm \eqref{algonoise}, the following regret and fit bounds hold
\begin{equation*}
	\left\{\begin{aligned}
		\mathfrak{R}^{\top}&\leqslant \frac{1}{2}\Vert\Upsilon(0)-\Upsilon^{*}\Vert^{2},\\
		\mathfrak{F}^{\top}&\leqslant 2N\sqrt{K_{f}T}+\sqrt{N}\Vert \Upsilon(0)-\Upsilon^{*}\Vert.
	\end{aligned}
	\right.
\end{equation*}
In other words, we have $\mathfrak{R}^{\top}=\mathcal{O}(1)$ and $\mathfrak{F}^{\top}=\mathcal{O}(\sqrt{T})$.
\end{thm}
\begin{proof}
Select the candidate Lyapunov function as follows $$\mathds{V}(t)=\frac{1}{2}\left\|\Upsilon-\Upsilon^{*}\right\|^{2}+\frac{1}{2}\left\|\mu-\mu^*\right\|^{2}.$$Then, taking the time derivative of $\mathds{V}$ along the direction of \eqref{algonoise} and based on Lemma \ref{tou22}, it yields that
\begin{equation*}
\begin{aligned}
\dot{\mathds{V}}\leqslant&\sum_{i=1}^{N}(x_{i}-x_{i}^{*})^{\top}\left[-\partial_{x_{i}}J_{i}(t,\Upsilon^{i})-kR_{i}\sum_{j=1}^{N}a_{ij}(\Upsilon^{i}-\Upsilon^{j})\right]\\
&-\sum_{i=1}^{N}(x_{i}-x_{i}^{*})^{\top}\mu_{i}^{\top}\partial_{x_i} g_{i}(t,x_{i})+\sum_{i=1}^{N}(\mu_{i}-\mu_{i}^{*})^{\top}g_{i}(t,x_{i})\\
&-K_{\mu}\sum_{i=1}^{N}\sum_{j\in\mathcal{N}_{i}}(\mu_{i}-\mu_{i}^{*})^{\top}\text{sgn}(\mu_{i}-\mu_{j}+\zeta_{i j}\left\|\mu_i-\mu_j\right\|)\\
&+\sum_{i=1}^{N}(\Upsilon_{-i}^{i}-\Upsilon_{-i}^{i^{*}})^{\top}\left[-kS_{i}\sum_{j=1}^{N}a_{ij}(\Upsilon^{i}-\Upsilon^{j})\right].
\end{aligned}
\end{equation*}
\par
Similar to the analysis in Theorem \ref{dingli11}, deploy $\mathds{V}_{1}$ and $\mathds{V}_{2}$ to represent the terms in the above formula with
\begin{equation*}
\begin{aligned}
	\mathds{V}_{1}=&\sum_{i=1}^{N}(x_{i}^{*}-x_{i})^{\top}\left[\partial_{x_{i}}J_{i}(t,\Upsilon^{i})+kR_{i}\sum_{j=1}^{N}a_{ij}(\Upsilon^{i}-\Upsilon^{j})\right]\\
	&+\sum_{i=1}^{N}(\Upsilon_{-i}^{i}-\Upsilon_{-i}^{i^{*}})\left[-kS_{i}\sum_{j=1}^{N}a_{ij}(\Upsilon^{i}-\Upsilon^{j})\right],
\end{aligned}
\end{equation*}
and
\begin{equation*}
	\begin{aligned}
	\mathds{V}_{2}=&\sum_{i=1}^{N}(x_{i}^{*}-x_{i})^{\top}\mu_{i}^{\top}\partial_{x_i} g_{i}(t,x_{i})+\sum_{i=1}^{N}(\mu_{i}-\mu_{i}^{*})^{\top}g_{i}(t,x_{i})\\
&-K_{\mu}\sum_{i=1}^{N}\sum_{j\in\mathcal{N}_{i}}(\mu_{i}-\mu_{i}^{*})^{\top}\text{sgn}(\mu_{i}-\mu_{j}+\zeta_{i j}\left\|\mu_i-\mu_j\right\|).
	\end{aligned}
\end{equation*}
Following the analysis in Theorem \ref{dingli11}, $\mathds{V}_{1}$ can be scaled as 
\begin{equation*}
	\mathds{V}_{1}\leqslant-\bar{\mathcal{R}}^{\top}-\lambda_{\min}(\mathbf{M})\Vert\Upsilon-\Upsilon^{*}\Vert^{2}.
\end{equation*}

As for $\mathds{V}_{2}$, it can be similarly derived that
	\begin{align}\label{dvi2}
		\mathds{V}_{2}\leqslant&\sum_{i=1}^{N}g_{i}(t,x_{i}^{*})^{\top}\mu_{i}-\sum_{i=1}^{N}g_{i}(t,x_{i})^{\top}\mu_{i}^{*}\\
		&-K_{\mu}\sum_{i=1}^{N}\sum_{j\in\mathcal{N}_{i}}(\mu_{i}-\mu_{i}^{*})^{\top}\text{sgn}(\mu_{i}-\mu_{j}+\zeta_{i j}\left\|\mu_i-\mu_j\right\|)\nonumber.
	\end{align}

\textbf{Step 1}: By letting $\bar{\mu}^{*}=\mathbf{0}_{q}$ and taking expectations on both sides of \eqref{dvi2}, it gives that \begin{align}\label{dv202}
		&\mathbb{E}[\mathds{V}_{2}|\mu(t)]\\
\leqslant&\mathbb{E}[K_g \sum_{i=1}^N \sum_{j=1}^N\left\|\mu_i-\mu_j\right\||\mu(t)]\nonumber\\
		&-\frac{K_{\mu}}{2}\mathbb{E}[\sum_{i=1}^{N}\sum_{j=1}^Na_{ij}\|\mu_{i}-\mu_{j}+\zeta_{i j}\|\mu_i-\mu_j\|\|_1|\mu(t)]\nonumber,
	\end{align}
where the inequality is obtained based on the symmetry of the probability density function of $\zeta_{ij}(t)$. Then, taking expectations on both sides of \eqref{dv202}, one derives that 
$$\begin{aligned}
\mathbb{E}[\mathds{V}_{2}]\leqslant &\mathbb{E}[K_g \sum_{i=1}^N \sum_{j=1}^N\left\|\mu_i-\mu_j\right\|]\\
&-\frac{K_{\mu}}{2}(1-\mathbb{E}\left[\left\|\zeta_{i j}\right\|_1\right]) \sum_{i=1}^N \sum_{j=1}^N a_{i j} \mathbb{E}\left[\left\|\mu_{i}-\mu_{j}\right\|\right] \\
\leqslant &\mathbb{E}[K_g \sum_{i=1}^N \sum_{j=1}^N\left\|\mu_i-\mu_j\right\|]\\
&-K_{\mu}(1-\mathbb{E}\left[\left\|\zeta_{i j}\right\|_1\right]) \mathbb{E}\left[\left\|\mu_{i_0}-\mu_{j_0}\right\|\right] \\
\leqslant &\mathbb{E}[K_g \sum_{i=1}^N \sum_{j=1}^N\left\|\mu_i-\mu_j\right\|]\\
&-\frac{K_{\mu}}{N^2}(1-\mathbb{E}\left[\left\|\zeta_{i j}\right\|_1\right]) \sum_{i=1}^N \sum_{j=1}^N \mathbb{E}\left[\left\|\mu_i-\mu_j\right\|\right],
\end{aligned}$$
where the second inequality holds based on the connectivity of the undirected graph $\mathcal{G}$ for $(i_0,j_0)=\text{argmax}_{(i,j)}\|\mu_{i}-\mu_{j}\|$.
\par
According to the condition $\mathbb{E}[\|\zeta_{i j}(t)\|_1] \leqslant 1-\frac{N^2 K_g}{K_\mu}$, gathering the analysis for $\mathds{V}_{1}$ and $\mathds{V}_{2}$, it holds that
\begin{equation*}
	\mathfrak{R}^{\top}\leqslant\frac{1}{2}\left\|\Upsilon(0)-\Upsilon^{*}\right\|^{2}.
\end{equation*}
Thus it can be concluded that $\mathfrak{R}^{\top}=\mathcal{O}(1)$.
\par
\textbf{Step 2}: Similar to the proof in Theorem \ref{dingli11}, the fit bound can be obtained that
\begin{equation*}
	\mathfrak{F}^{\top}\leqslant 2N\sqrt{K_{f}T}+\sqrt{N}\Vert\Upsilon(0)-\Upsilon^{*}\Vert.
\end{equation*}
Thus, $\mathfrak{F}^{\top}=\mathcal{O}(\sqrt{T})$ can be reached.
\end{proof}
\begin{rem}
As far as we know, this work is the first to explicitly incorporate measurement noises into the VGNE seeking process for online games. This consideration is crucial since measurement uncertainties are inevitable in practical systems and may significantly affect convergence accuracy. Unlike \cite{cao2021decentralized,yu2024distributed}, which focus solely on online optimization, this paper handles competitive multi-player interactions with coupling costs and constraints under noisy conditions.
\end{rem}
 \section{Numerical Simulation}\label{NE}
\subsection{Online UAV Swarm Game}
A UAV swarm, analogous to a bee colony \cite{Wang2023}, consists of multiple UAVs capable of dynamically sensing and sharing information to execute tasks such as resource exploration, intelligence reconnaissance, and offensive-defensive operations. In this subsection, a numerical example including five UAVs interacting over an undirected connected graph in Fig. \ref{topology} is given to testify the effectiveness of algorithms \eqref{algo1}, \eqref{algonoise} and \eqref{algo2} with dynamic event-triggered mechanism \eqref{triggerc}, with $x_i=\col(x_{i,a},x_{i,b})~ (i=1,2,\cdots,5)$ denoting UAV $i$'s coordinates. The time-varying function $J_{i}$ represents the objective of each UAV to reach its target position while maintaining an appropriate distance from neighboring UAVs to ensure connectivity. Meanwhile, the time-varying function $g_{i}$ characterizes the coupling constraints on the position coordinates of the UAVs, and they are formulated as
 \begin{equation*}
 \begin{aligned}
 J_{1}=&1.11((x_{1,a}-2\cos(10t)-1)^{2}+(x_{1,b}-2\cos(10t)-1)^{2})\\&+0.01\sum_{j\in\mathcal{N}_{1}}\Vert x_{1}-x_{j}\Vert^{2},\\
J_{2}=&1.12((x_{2,a}-\cos(20t)-1)^{2}+(x_{2,b}-\cos(20t)-1)^{2})\\&+0.02\sum_{j\in\mathcal{N}_{2}}\Vert x_{2}-x_{j}\Vert^{2},\\
 J_{3}=&1.13((x_{3,a}-\cos(20t)-3)^{2}+(x_{3,b}-\cos(20t)-3)^{2})\\&+0.03\sum_{j\in\mathcal{N}_{3}}\Vert x_{3}-x_{j}\Vert^{2},\\
 J_{4}=&1.14((x_{4,a}-3\cos(10t)-2)^{2}+(x_{4,b}-3\cos(10t)-2)^{2})\\&+0.04\sum_{j\in\mathcal{N}_{4}}\Vert x_{4}-x_{j}\Vert^{2},\\
 J_{5}=&1.15((x_{5,a}-\cos(15t)-2)^{2}+(x_{5,b}-\cos(15t)-2)^{2})\\&+0.04\sum_{j\in\mathcal{N}_{5}}\Vert x_{5}-x_{j}\Vert^{2},
 \end{aligned}
 \end{equation*}
  \begin{equation*}
 \begin{aligned}
 g_{1}=&(0.3\sin(15t)+1.7)x_{1,a}+(0.2\sin(10t)+1.8)x_{1,b}-1,\\
 g_{2}=&(0.4\sin(20t)+1.6)x_{2,a}+(0.4\sin(20t)+1.6)x_{2,b}-2,\\
 g_{3}=&(0.5\sin(10t)+1.5)x_{3,a}+(0.6\sin(25t)+1.4)x_{3,b}-3,\\
 g_{4}=&(0.6\sin(15t)+1.4)x_{4,a}+(0.8\sin(15t)+1.2)x_{4,b}-4,\\
 g_{5}=&(0.7\sin(10t)+1.3)x_{5,a}+(0.5\sin(10t)+1.5)x_{5,b}-5.\\
 \end{aligned}
 \end{equation*}
 \begin{figure}[htbp]
 	\centering
 	\includegraphics[width=0.45\linewidth]{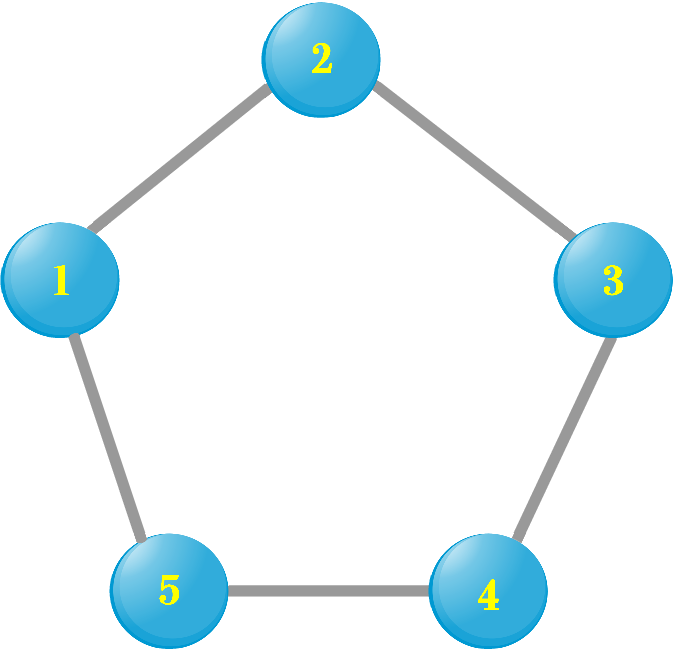}
 	\caption{The communication topology among five UAVs.}
 	\label{topology}
 \end{figure}Moreover, the searching space of each UAV is restricted to a closed convex set $\Omega_{i}$, which is defined as the closed interval constraint $[-3,3]\times[-3,3]$.

First of all, set the step size as $0.001s$ and $T=100s$. The initial values $x_i(0)$ are randomly selected in $[-3,3]\times[-3,3]$, and let $\mu(0)=\mathbf{0}_5,~ k=20$ and $K_\mu=500$. Then, based on the cost functions, constraints and communication graph, the problem constants can be calculated as $m=2.22,~m_0=2.32,~l=2.7,~K_g=17,~\lambda_{2}(\mathbf{L})=1.3820$. Substituting these values into the conditions of Theorems \ref{dingli11} and \ref{dingli222} yields $m+\frac{m_{0}}{2}=2.22+1.16=3.38>l=2.7$, $k>9.711\geqslant\frac{(l-l_{0})^{2}+(4m+2m_{0})l_{0}}{(4m+2m_{0}-4l)\lambda_{2}(\mathbf{L})}$, $k>19.422\geqslant\frac{2(l-l_{0})^{2}+(8m+4m_{0})l_{0}}{(4m+2m_{0}-4l)\lambda_{2}(\mathbf{L})}$ and $K_\mu\geqslant NK_g=85$. Therefore, the chosen simulation parameters satisfy the inequalities required by Theorems \ref{dingli11} and \ref{dingli222}. Based on the continuous-time VGNE seeking algorithm \eqref{algo1}, the evolutions of $\mathcal{R}^\top$ and $\frac{\mathcal{F}^\top}{\sqrt{T}}$ are shown in Fig. \ref{regret1}. It is clear that the regret bound $\mathcal{R}$ and the fit bound $\mathcal{F}$ are $\mathcal{O}(1)$ and $\mathcal{O}(\sqrt{T})$, respectively.
 \begin{figure}[htbp]
 	\centering
 	\includegraphics[width=\linewidth]{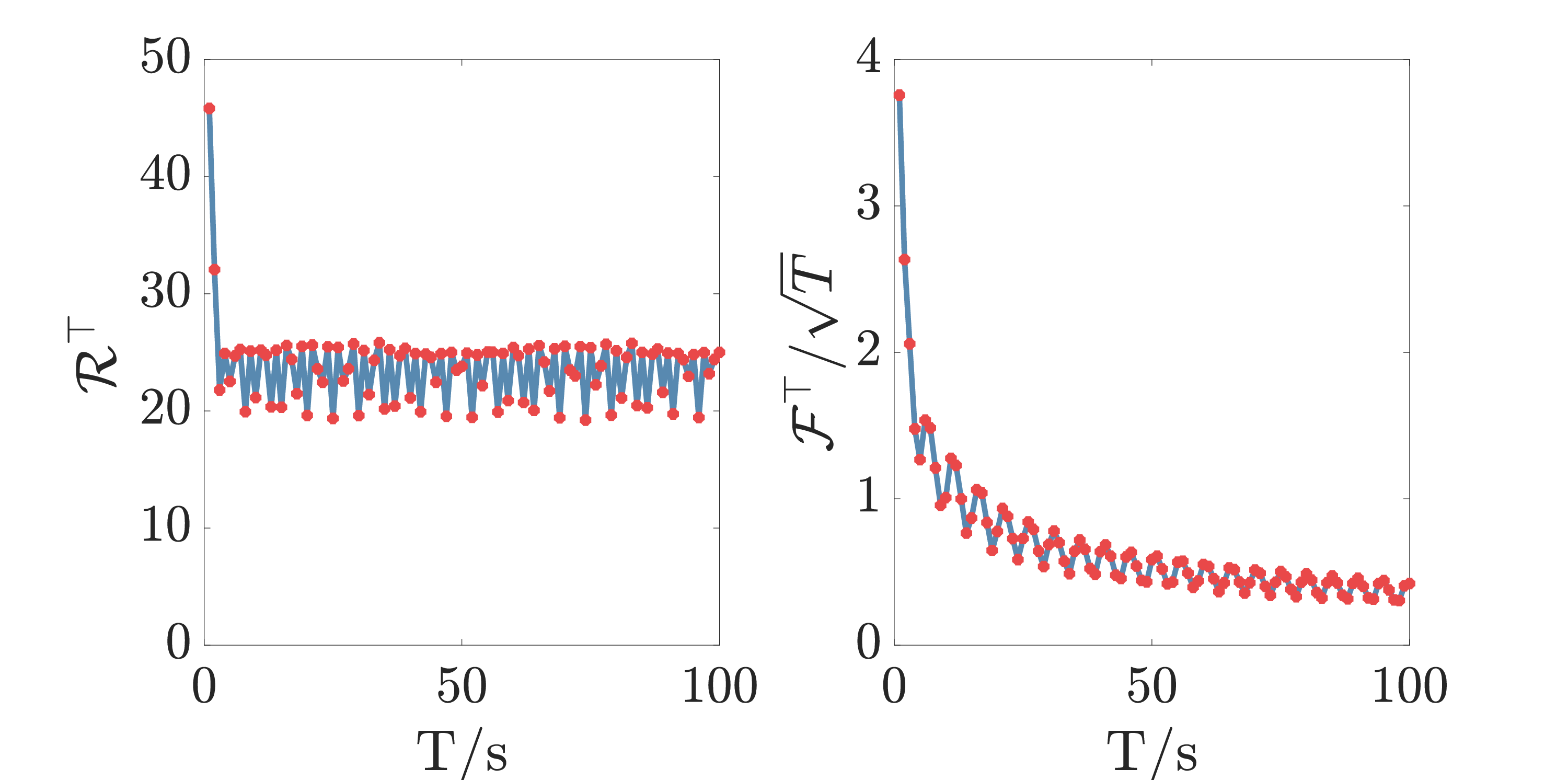}
 	\caption{Evolutions of $\mathcal{R}^\top$ and $\frac{\mathcal{F}^\top}{\sqrt{T}}$ based on algorithm \eqref{algo1}.}
 	\label{regret1}
 \end{figure}

Then, set the initial values $\beta_i(0)=\gamma_i(0)=1000$ and $\eta=0.001$. Based on the VGNE seeking algorithm \eqref{algo2} with dynamic event-triggered mechanism \eqref{triggerc}, the evolutions of $\mathcal{R}^\top$ and $\frac{\mathcal{F}^\top}{\sqrt{T}}$ are shown in Fig. \ref{regret2}.\begin{figure}[htbp]
 	\centering
 	\includegraphics[width=\linewidth]{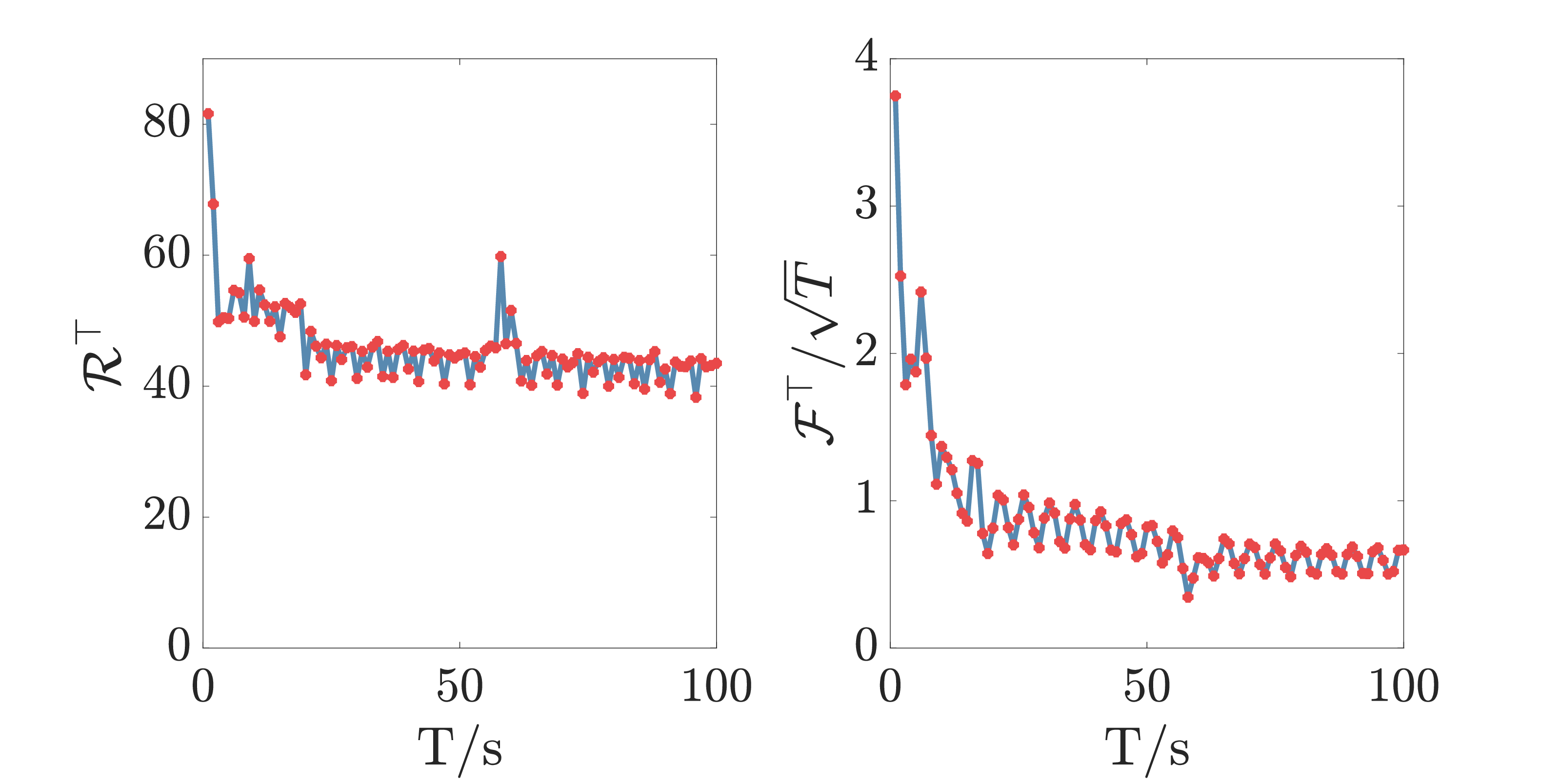}
 	\caption{Evolutions of $\mathcal{R}^\top$ and $\frac{\mathcal{F}^\top}{\sqrt{T}}$ based on algorithm \eqref{algo2} with dynamic event-triggered mechanism \eqref{triggerc}.}
 	\label{regret2}
 \end{figure} The constant regret bound and the sublinear fit bound are still remained. For comparison, the evolutions of the fits based on algorithms \eqref{algo1} and \eqref{algo2} are also included in Fig. \ref{fit1}. It can be seen that the trajectories of the fits fluctuate violently and there is no trend of convergence. Moreover, to testify the efficiency of the event-triggered mechanism \eqref{triggerc}, the inter-event intervals of UAVs 1-2 are shown in Fig. \ref{tri}. It evidently illustrates that the event-triggered mechanism can reduce unnecessary communication among UAVs.
  \begin{figure}[htbp]
 	\centering
 	\includegraphics[width=\linewidth]{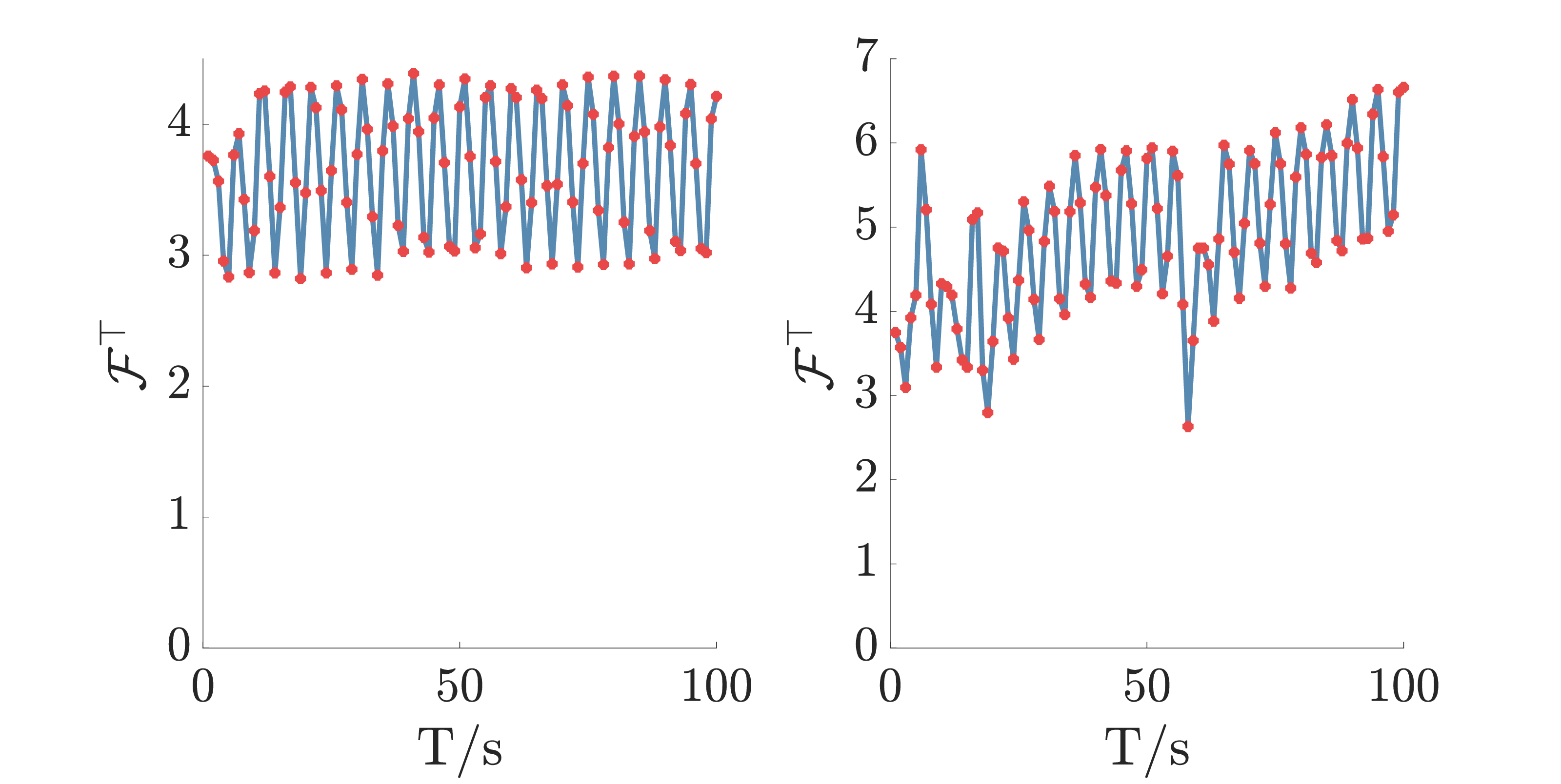}
 	\caption{Evolutions of $\mathcal{F}^\top$ based on algorithms \eqref{algo1} (Left) and \eqref{algo2} (Right).}
 	\label{fit1}
 \end{figure}
  \begin{figure}[htbp]
 	\centering
 	\includegraphics[width=\linewidth]{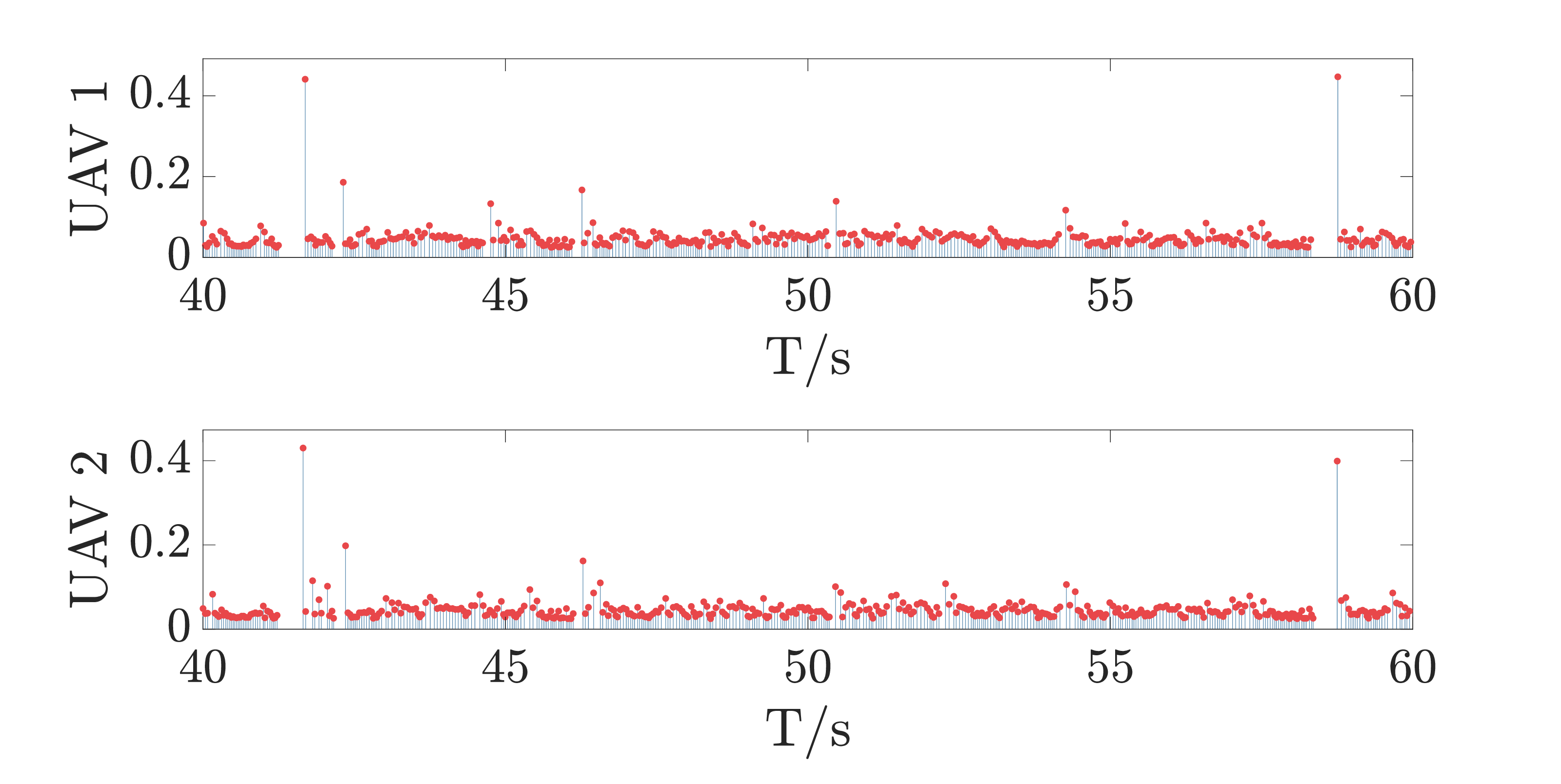}
 	\caption{Inter-event intervals of UAVs 1-2.}
 	\label{tri}
 \end{figure}
 
 Finally, the effectiveness of the noise-resilient VGNE seeking algorithm \eqref{algonoise} is validated, where the measurement noise follows a Gaussian distribution with the probability density function $$\varpi_{ij}(y)=\frac{1}{\nu\sqrt{2\pi}}e^{-\frac{(y-\kappa)^2}{2\nu^2}},$$where $\kappa=0$ and $\nu=0.118$. It can be verified that $K_\mu\geqslant N^2K_g=425$ and $\mathbb{E}[\|\zeta_{i j}(t)\|_1] \leqslant 1-\frac{N^2 K_g}{K_\mu}=0.15$. Thus, the selected parameters satisfy the inequalities required by Theorem \ref{dinglinoise}. Based on algorithm \eqref{algonoise}, the evolutions of $\mathcal{R}^\top$ and $\frac{\mathcal{F}^\top}{\sqrt{T}}$ are illustrated in Fig. \ref{noisetu}. It can be observed that the regret bound $\mathcal{R}$ and the fit bound $\mathcal{F}$ scale as $\mathcal{O}(1)$ and $\mathcal{O}(\sqrt{T})$, respectively.
 \begin{figure}[htbp]
 	\centering
 	\includegraphics[width=\linewidth]{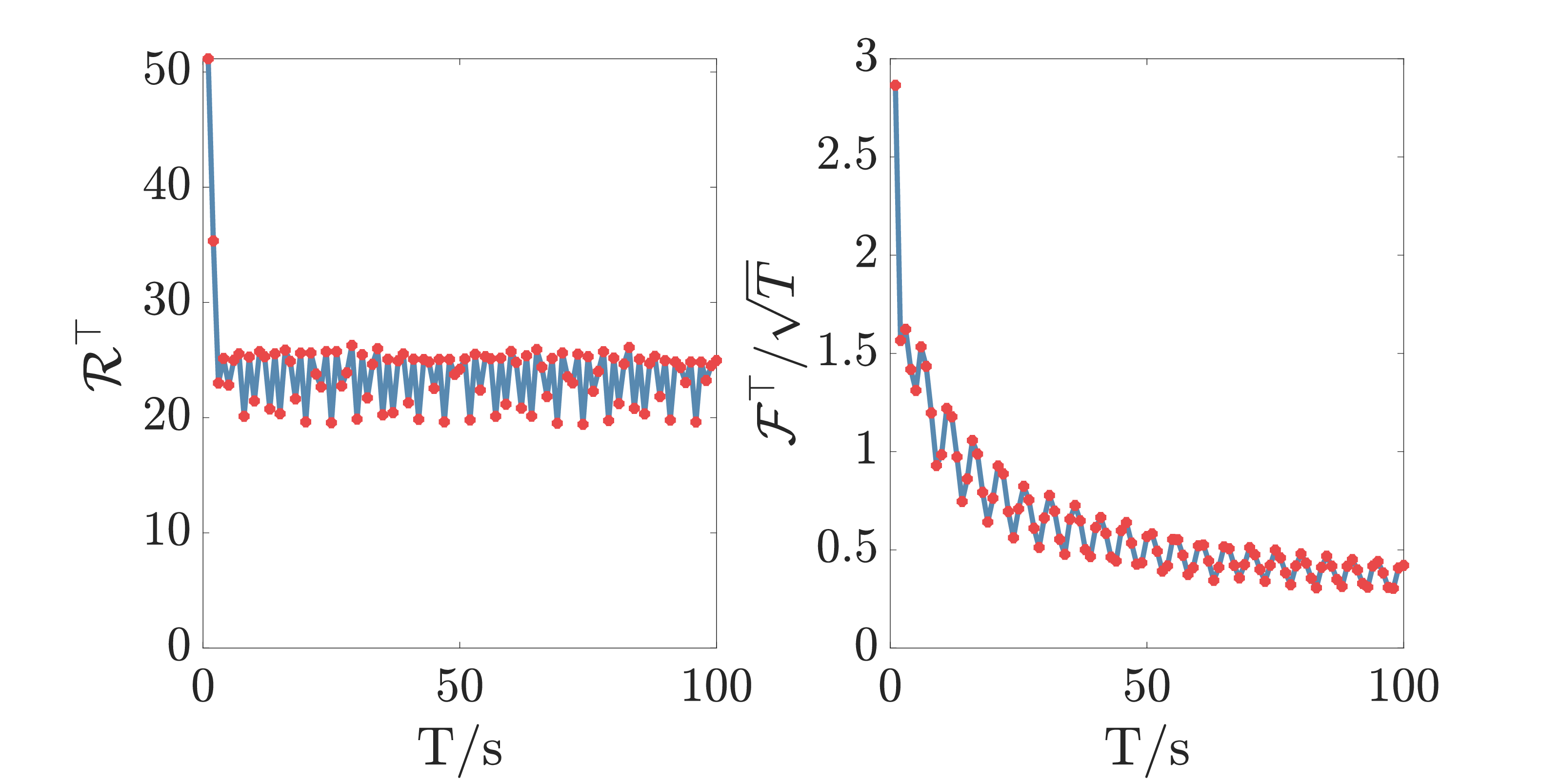}
 	\caption{Evolutions of $\mathcal{R}^\top$ and $\frac{\mathcal{F}^\top}{\sqrt{T}}$ based on algorithm \eqref{algonoise}.}
 	\label{noisetu}
 \end{figure}
 \subsection{Online Nash-Cournot Game}
 In this subsection, an online Nash-Cournot game involving ten firms is introduced to testify the effectiveness of algorithm \eqref{algo_juhe} for online aggregative game. The online Nash-Cournot game is represented by $\bar{\Gamma}(\mathcal{V},J_{i},\Omega_{i}\cap U(t))$, where the closed convex set constraints $\Omega=\Omega_1\times\Omega_2\cdots\times\Omega_{10}$ and the coupling inequality constraint $U(t)$ denote the production constraints and market capacity constraints on the strategies of all the firms, respectively. $J_{i}$ is the local time-varying cost function of firm $i$, including an aggregative term $\sigma(x)$. Each firm is willing to interact with its neighbors to grasp necessary information via an undirected connected communication graph as shown in Fig. \ref{topology2}. The production quantity of the $i$th firm is denoted by $x_{i}\in\mathbb{R}$. In addition, due to dynamic economic factors, the demand price, marginal costs, and production cost are time-varying, where the $i$th firm's time-varying production cost and time-varying demand price are provided as $\rho_{i}(t,x_{i})=\alpha_{i}(t)x_{i}^2$ and $\tau_{i}(t,x)=\chi_{i}(t)-\sigma(x)$ for some $\alpha_{i}(t),~\chi_{i}(t)>0$. To conclude, the cost function of firm $i$ is given as
\begin{equation*}
	J_{i}(t,x_{i},\sigma(x))=\rho_{i}(t,x_{i})-x_{i}\tau_{i}(t,x),~\forall t\in[0,T].
\end{equation*}
Generally, firm $i$ aims to develop a strategy (i.e., its production quantity $x_{i}$) to reduce its overall cost over $T$ time span. It is preset that $\Omega_{1}=\Omega_{2}=\cdots=\Omega_{10}=[0,30]$, the firm $i$'s production cost $\rho_{i}(t,x_{i})=x_{i}^2(0.1\sin(20t)+2)$ and the demand price $\tau_{i}(t,x)=21+i/9-0.5i\sin(20t)-\sigma(x)$ with $\sigma(x)=\sum_{i=1}^{N}x_{i}/10$. Additionally, the coupling inequality constraint, which serves as the time-varying market capacity constraint, is designed as $\sum_{i=1}^{N}5x_{i}\leqslant\sum_{i=1}^{N}l_{i}(t)$ with $l_{i}(t)=5*(4+\sin(10t))$.
\par
First, similar to the online UAV swarm game, set the step size as $0.001s$ and $T=100s$. The initial values $x_{i}(0)$ are randomly chosen within the interval $[0,30]$, and let $\mu(0)=\mathbf{0}_{10}$, $w=280$ and $K_{\mu}=1400$. Then, based on the cost functions and constraints, the problem constants can be calculated as $m=3.9,~m_0=4,~l\leqslant5.63,~K_g=135$. Substituting these values into the conditions of Corollary \ref{juhe_v} yields $m+\frac{m_{0}}{2}=3.9+2=5.9\geqslant l$, $K_\mu\geqslant NK_g=1350$ and $w>(N-1)\iota$. Finally, the evolutions of $\mathcal{R}^{\top}$ and $\frac{\mathcal{F}^{\top}}{\sqrt{T}}$ based on algorithm \eqref{algo_juhe} are shown in Fig. \ref{nc_1}. It is verified that the regret bound $\mathcal{R}$ and the fit bound $\mathcal{F}$ are $\mathcal{O}(1)$ and $\mathcal{O}(\sqrt{T})$, respectively.
\begin{figure}[htbp]
	\centering
	\includegraphics[width=\linewidth]{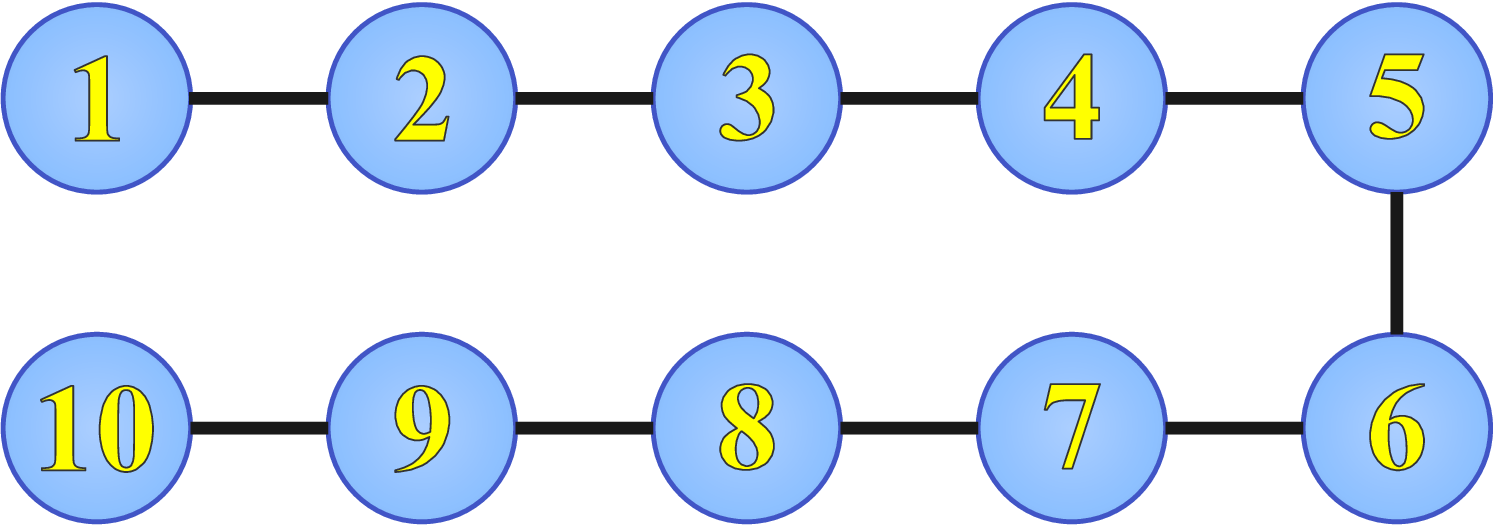}
	\caption{The communication topology among ten firms.}
	\label{topology2}
\end{figure}
\begin{figure}[htbp]
	\centering
	\includegraphics[width=\linewidth]{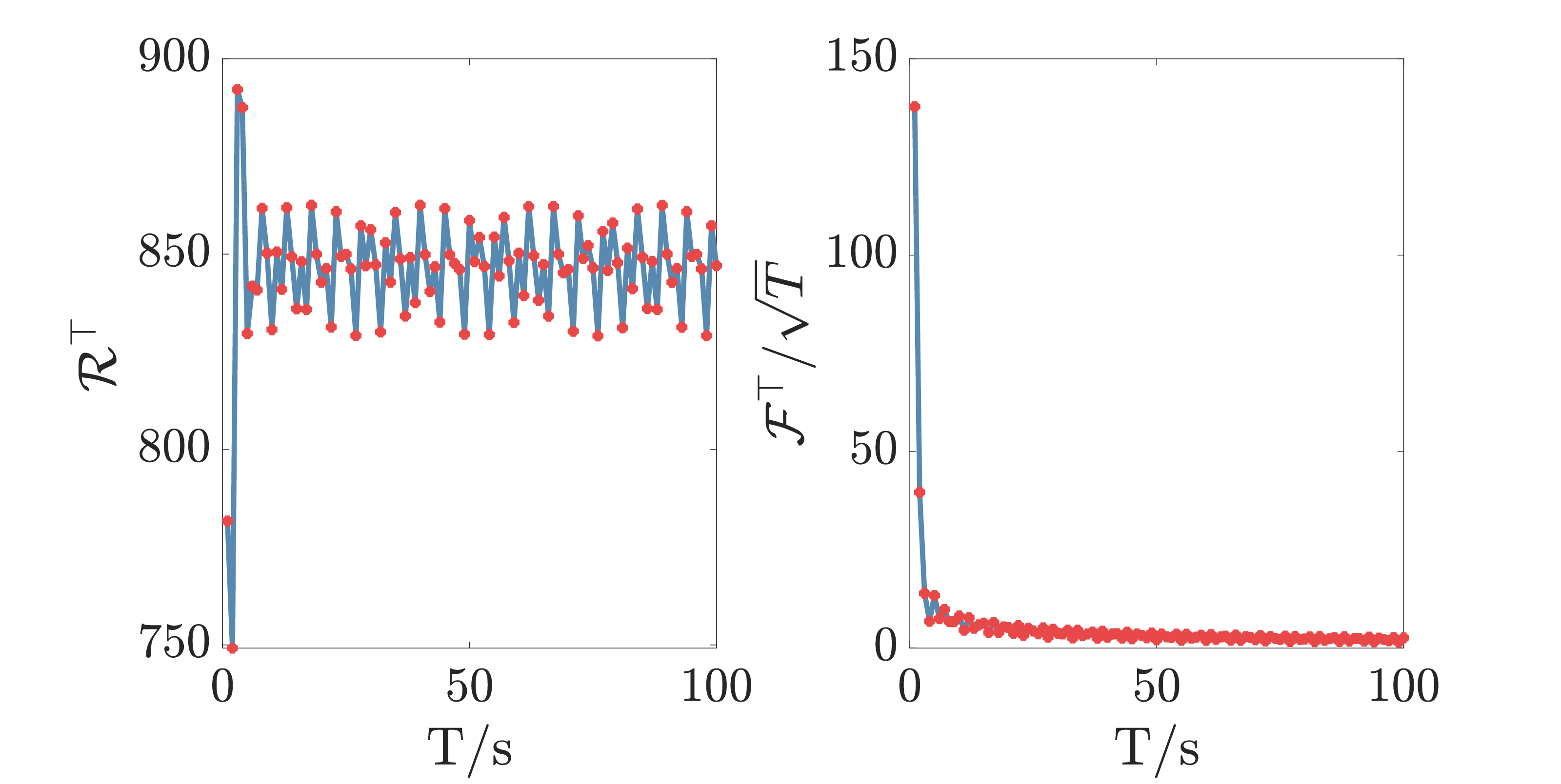}
	\caption{Evolutions of $\mathcal{R}^\top$ and $\frac{\mathcal{F}^\top}{\sqrt{T}}$ based on algorithm \eqref{algo_juhe}.}
	\label{nc_1}
\end{figure}
\section{Conclusion}\label{C}
In this paper, two novel continuous-time distributed VGNE seeking algorithm tailored for online noncooperative game and online aggregative game with time-varying coupling inequality constraints are proposed. A constant regret bound and a sublinear fit bound are successfully achieved, which are superior to the basic criteria established for online optimization problems and online games. Moreover, a dynamic event-triggered mechanism is introduced into the VGNE seeking algorithm to reduce communication overhead among players, while still maintaining the desired performance metrics and prohibiting Zeno behavior. Moreover, the proposed algorithm is further extended to account for measurement noise and it is shown that comparable performance can still be guaranteed under moderate noise disturbances. The numerical examples presented further validate the effectiveness of the proposed algorithms, demonstrating their applicability in solving complex game-theoretic problems under time-varying constraints. In the future, the authors will focus on the robustness of the VGNE seeking algorithm for online game considering the influence of external disturbances.
\section{References}
\bibliography{reference}
\bibliographystyle{IEEEtran}
\begin{IEEEbiography}[{\includegraphics[width=\linewidth]{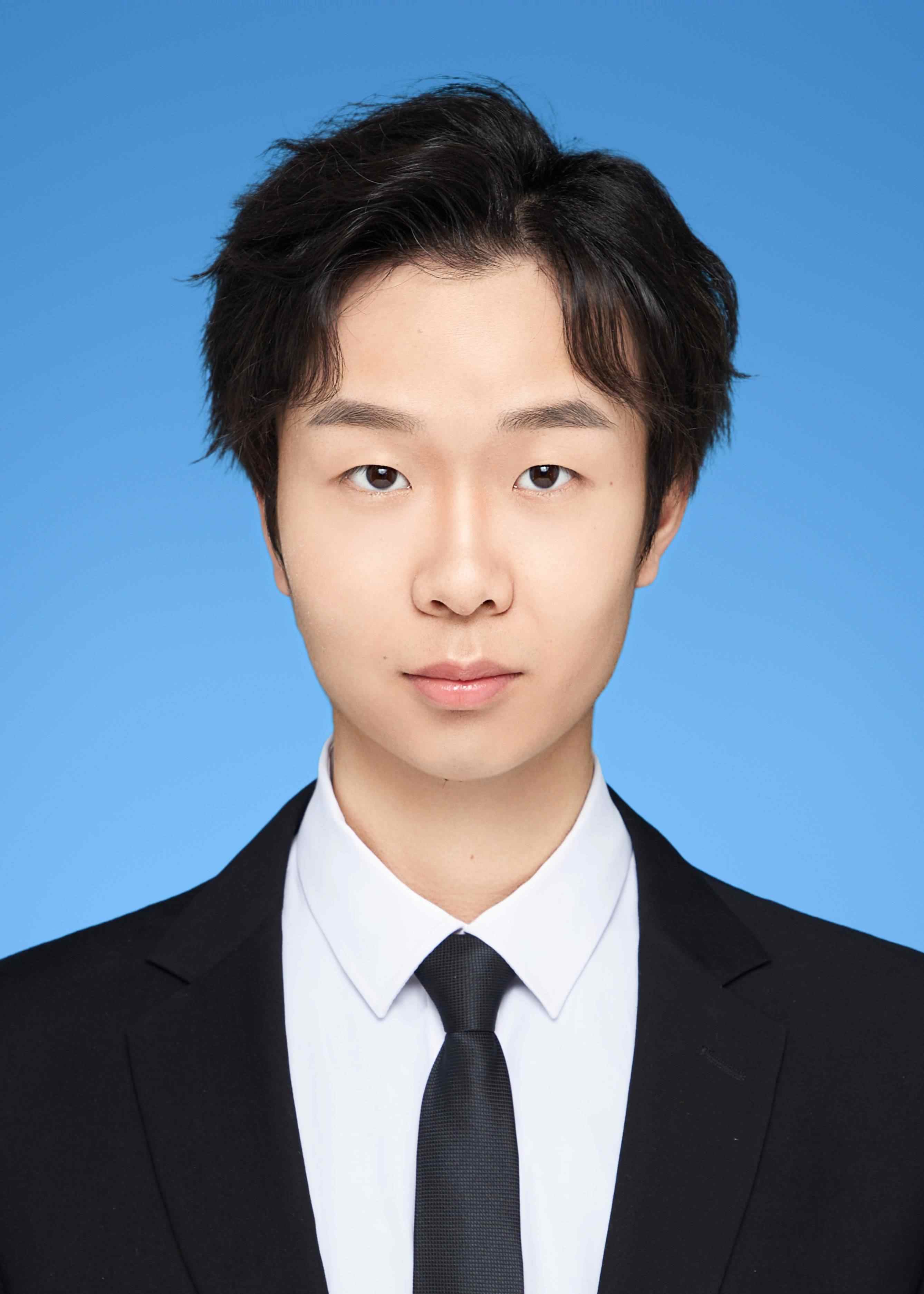}}]{Jianing Chen}
was born in 1999. He received his B.S. degree in applied mathematics from Harbin Institute of Technology, Weihai, China, in 2021, where he is currently pursuing the Ph.D. degree with the Department of Mathematics. Since 2023, he has been a Joint Ph.D. student at the Department of Systems Engineering, City University of Hong Kong. His current research interests include game theory and distributed optimization.
\end{IEEEbiography}
\begin{IEEEbiography}[{\includegraphics[width=\linewidth]{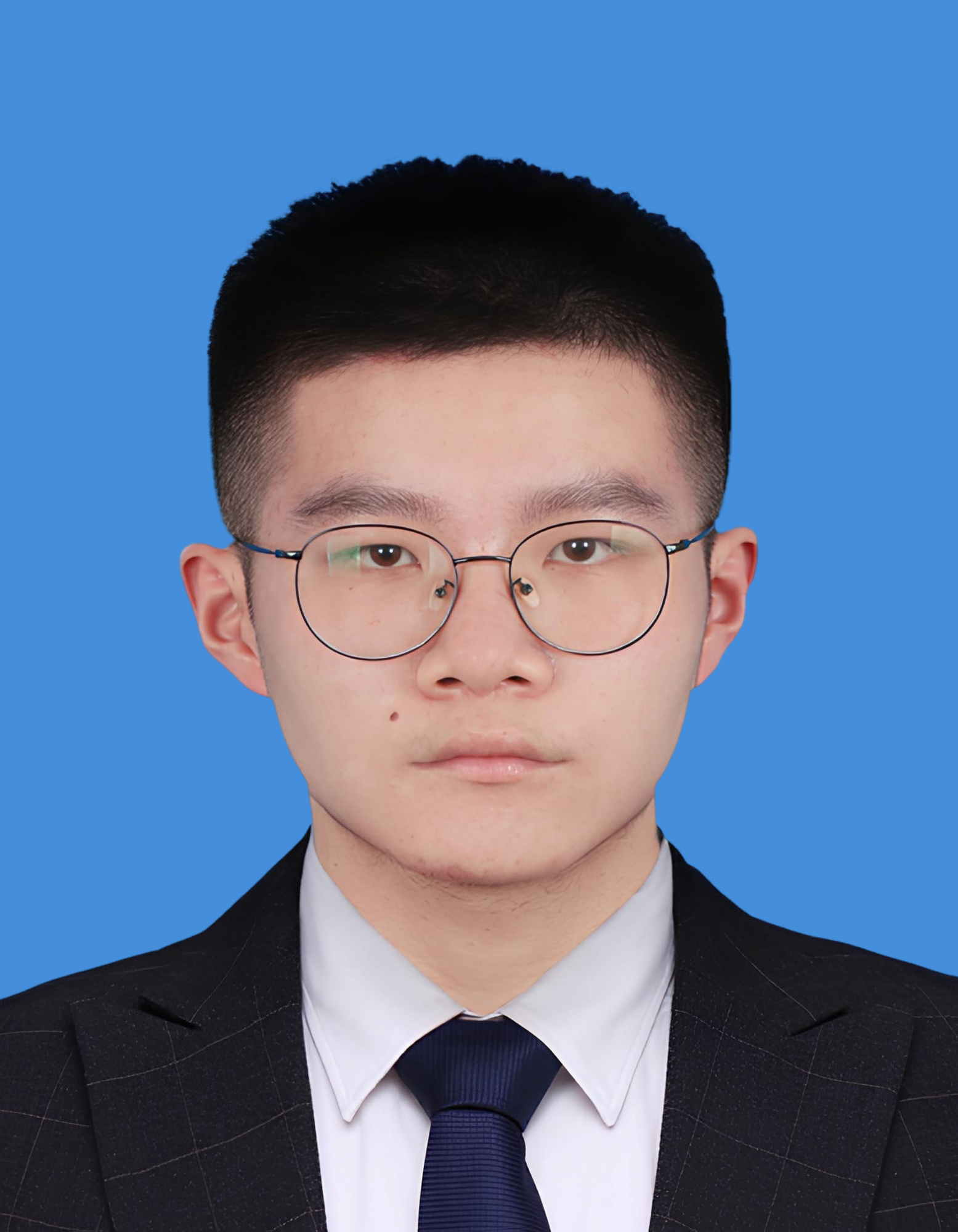}}]{Sichen Qian}
was born in 2002. He received his B.S. degree in applied mathematics from Harbin Institute of Technology, Weihai, China, in 2024. He is currently
pursuing the M.S. degree with Southeast University, Nanjing, China. His current research interests include game theory.
\end{IEEEbiography}
\begin{IEEEbiography}[{\includegraphics[width=\linewidth]{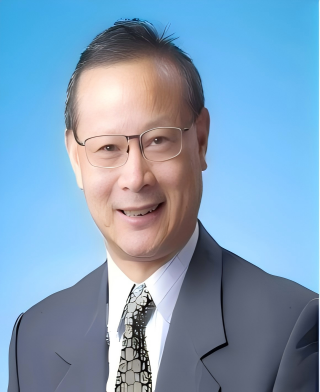}}]{Chuangyin Dang}
(Senior Member, IEEE) received the Ph.D. degree (cum laude) in operations research/mathematical economics from Tilburg University, Tilburg, The Netherlands, in 1991. He held faculty positions with the University of California at Davis, Davis, CA, USA; Delft University of Technology, Delft, The Netherlands; and The University of Auckland, Auckland, New Zealand. He was a Research Fellow with the Cowles Foundation for Research in Economics, Yale University, New Haven, CT, USA, where he was invited to work as a Research Fellow, in 1994. He is currently a Professor of systems engineering with the City University of Hong Kong, Hong Kong. He is best known for the developments of the D1-triangulation of the Euclidean space and simplicial and fixed-point iterative methods for integer programming. He has published over 130 articles in top journals. His research focuses on systems modeling, analysis, and optimization. Dr. Dang received the Award of Outstanding Research Achievements from Tilburg University in 1990 for his significant contributions.
\end{IEEEbiography}
\begin{IEEEbiography}[{\includegraphics[width=\linewidth]{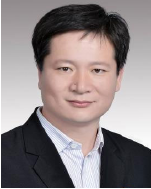}}]{Sitian Qin}
was born in Shandong, China, in 1981. He received the Ph.D. degree in mathematics from the Harbin Institute of Technology, Harbin, China, in 2010. Since 2010, he has worked at Harbin Institute of Technology, Weihai, China. He was an associate professor from 2013 to 2018 and is currently a professor. His current research interests include game theory and neural network theory.
\end{IEEEbiography}
\end{document}